\newtheorem{mythm}{Theorem}
\newtheorem{mydef}[mythm]{Definition}
\newtheorem{myprop}[mythm]{Proposition}
\newtheorem{myconj}[mythm]{Conjecture}
\newtheorem{mycor}[mythm]{Corollary}
\newtheorem*{restated-cor}{Corollary \ref{cor:contractible}}
\newtheorem*{restated-thm}{Theorem \ref{thm:embedded-disk}}
\newtheorem*{remark}{Remark}
\definecolor{bg}{gray}{0.95}
\definecolor{codegreen}{rgb}{0,0.6,0}
\definecolor{codegray}{rgb}{0.5,0.5,0.5}
\definecolor{codepurple}{rgb}{0.58,0,0.82}
\definecolor{backcolour}{rgb}{0.95,0.95,0.92}
\lstdefinestyle{mystyle}{
    backgroundcolor=\color{backcolour},   
    commentstyle=\color{codegreen},
    keywordstyle=\color{magenta},
    numberstyle=\tiny\color{codegray},
    stringstyle=\color{codepurple},
    basicstyle=\footnotesize\ttfamily,
    breakatwhitespace=false,         
    breaklines=true,                 
    captionpos=b,                    
    keepspaces=true,                 
    numbers=left,                    
    numbersep=5pt,                  
    showspaces=false,                
    showstringspaces=false,
    showtabs=false,                  
    tabsize=2
}
\title{Classification of cellular fake surfaces}
\date{}
\author{Lucas Fagan}
\address{Department of Mathematics, University of California, Santa Barbara, CA 93106, USA}
\email{lucasfagan@math.ucsb.edu}
\author{Yang Qiu}
\address{Chern Institute of Mathematics and LPMC, Nankai University, Tianjin 300071, China}
\email{yangqiu@nankai.edu.cn}
\author{Zhenghan Wang}
\address{Department of Mathematics, University of California, Santa Barbara, CA 93106, USA}
\email{zhenghwa@math.ucsb.edu}
\begin{document}

\begin{abstract}
Generic polyhedra are interesting mathematical objects to study in their own right.  In this paper, we initiate a systematic study of two-dimensional generic polyhedra with an eye towards applications to low-dimensional topology, especially the Andrews-Curtis and Zeeman conjectures.  After recalling the basic notions of generic polyhedra and fake surfaces, we derive some interesting properties of fake surfaces. Our main result is a complete classification of acyclic cellular fake surfaces up to complexity 4 and a classification of acyclic cellular fake surfaces without small disks of complexity 5.  From this classification, we prove the contractibility conjecture for acyclic cellular fake surfaces of complexity 4, and the embedded disk conjecture up to complexity 5. We provide evidence for the conjectures that the probability of being a spine among fake surfaces is 0 and that every contractible fake surface has an embedded disk.
\end{abstract}

\maketitle

\section{Introduction}
The singularities of soap-films in the Plateau problem as well as the skeleton of the microscopic sea life called Radiolaria are examples of generic two-dimensional polyhedra in nature \cite{taylor1976structure}.  Generic polyhedra are interesting mathematical objects to study in their own right. One-dimensional generic polyhedra are simply trivalent graphs.  In this paper, we initiate a systematic study of two-dimensional generic polyhedra with an eye towards applications to low-dimensional topology, especially the Andrews-Curtis conjecture, the Zeeman conjecture, and the 4-dimensional smooth Poincar\'e conjecture \cite{matveev2007algorithmic,hog1993two}.

Two-dimensional generic polyhedra have been used in several different contexts.  They form an essential tool in the study of spines of 3-manifolds \cite{casler1965imbedding,matveev2007algorithmic}, called special simple spines, and quantum invariants of 3-manifolds \cite{turaev1992state}.  Decorated with gleams, they represent 4-manifolds in Turaev's shadow theory \cite{turaev2010quantum, costantino2004short}.  Four manifolds of shadow complexity 0 and 1 are completely classified \cite{martelli2011four,koda2022four}, and Mazur-type contractible manifolds are studied using contractible fake surfaces \cite{hironobu2018mazur,koda2020shadows}. Two-dimensional generic polyhedra also appeared in the capped grope theory as finite stage approximations of embedded disks in 4-manifolds \cite{freedman2014topology}.  But in this paper, we study them as stand-alone topological objects generalizing surfaces.  As such, we are interested in their fundamental properties such as a classification.  We prefer to use the terminology fake surfaces from \cite{ikeda1971acyclic} with a slight modification: our fake surfaces refer to abstract polyhedra with underlying generic 2-polyhedra.  Therefore, our classification is for fake surfaces as singular PL manifolds represented by generic 2-polyhedra.

Arguably the most interesting topological spaces in geometric topology are manifolds, and one important question is their classification.  Manifolds are locally homogeneous in the sense that every point $x$ of an $n$-manifold $M$ has a neighborhood which is homeomorphic to the Euclidean space $\mathbb{R}^n$.  A polyhedron is not locally homogeneous in general, but a generic polyhedron has local regularity.  A polyhedron $P$ is a subset of some Euclidean space $\mathbb{R}^N$ such that every point $x\in P$ has a conical neighborhood $N(x)=xK$ for a compact subset $K$. For a generic $n$-polyhedron $P$, $N(x)$ can be only one of $(n+1)$ cases.

Fake surfaces are generic $2$-polyhedra, so they have only two kinds of singularities beside the regular manifold points as in Fig.~1: a point on an edge where three half planes join (type 1 singularity), and the cone point of a cone on a circle with three radii from the center (type 2 singularity).\footnote{Our fake surfaces are called closed in \cite{ikeda1971acyclic}. Our terminologies are different from the conventions in \cite{ikeda1971acyclic}, where manifold points are called type 1 singularities and our type $k$ singularities are of type $k+1$ there.} 
\begin{figure}[h] 
  \centering
\begin{tikzpicture}
\draw[thick] (0,0)--(2,0);
\draw[dashed] (0,0)--(0.5,0.5);
\draw (0,0)--(0,1);
\draw (0,1)--(2,1);
\draw (0,0)--(-0.5,-0.5);
\draw (-0.5,-0.5)--(1.5,-0.5);
\draw (1.5,-0.5)--(2,0);
\draw[dashed] (0.5,0.5)--(2,0.5);
\draw (2,1)--(2,0);
\draw (2,0.5)--(2.5,0.5);
\draw (2.5,0.5)--(2,0);
\filldraw (1,0) circle(1pt);

\begin{scope}[shift={(4,0)}]
\draw[thick] (0,0)--(2,0);
\draw[dashed] (0,0)--(0.5,0.5);
\draw (0,0)--(0,1);
\draw (0,1)--(2,1);
\draw (0,0)--(-0.5,-0.5);
\draw (-0.5,-0.5)--(1.5,-0.5);
\draw (1.5,-0.5)--(2,0);
\draw[dashed] (0.5,0.5)--(2,0.5);
\draw (2,1)--(2,0);
\draw (2,0.5)--(2.5,0.5);
\draw (2.5,0.5)--(2,0);
\filldraw (1,0) circle(1pt);
\draw[dashed,thick] (1.5,0.5)--(1,0);
\draw[thick] (1,0)--(0.5,-0.5);
\draw[dashed] (1.5,0.5)--(1.5,-0.3);
\draw[dashed] (1.5,-0.3)--(1.3,-0.5);
\draw (1.3,-0.5)--(0.5,-1.3);
\draw (0.5,-0.5)--(0.5,-1.3);
\end{scope}
\end{tikzpicture}
\caption{Two Types of Singularities}
\end{figure}
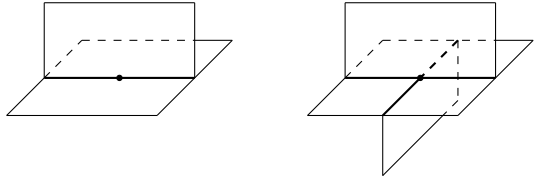

The three kinds of points form an intrinsic stratification of a fake surface: regular manifold points with disk neighborhoods form 2-manifolds, type 1 singularities of the triple edges form 1-manifolds, and type 2 singularities consist of points. In this paper, we will always impose cellularity: the intrinsic stratification leads to a CW complex.  Hence, in a cellular fake surface, every edge is a 1-cell and every face a 2-cell.  The number of vertices of a cellular fake surface $F$ will be called its complexity $c(F)$.  Cellular fake surfaces that are spines of $3$ manifolds are called special simple spines.  Two-dimensional generic polyhedra are special even among generic polyhedra: dimension two is the only dimension in which there are generic polyhedra that are not spines of manifolds of one dimension higher \cite{matveev1973special}.

Cellular fake surfaces have fascinating relations with the 3-dimensional Poincar\'e conjecture, Andrews-Curtis conjecture, and the Zeeman conjecture \cite{matveev2007algorithmic,hog1993two}.  Fake surfaces are studied in \cite{ikeda1971acyclic} with an application to the Zeeman conjecture. 

The main goal of our paper is a classification of low-complexity acyclic cellular fake surfaces. In Conjecture 2 on Page 4 of \cite{robertson1973fake}, it is conjectured that all acyclic fake surfaces of complexity less than 5 are contractible.  We settle this conjecture in the affirmative.  We also found evidence for two new conjectures: the probability of being a spine among fake surfaces is 0, and every contractible fake surface has an embedded disk. Contractible fake surfaces that are spines of the 3-ball could be regarded as interesting shapes of a point.  All physical points occupy non-empty space, and hence have a non-zero size due to the Heisenberg uncertain principle.  If we accept that physical points should have generic shapes, then contractible fake surfaces are attractive candidates.

Our main result is the following classification:

\begin{mythm}
\label{thm:classification}
There are two acyclic cellular fake surfaces of complexity one, 17 of complexity two, 238 of complexity three, and 4618 of complexity four.
    
\end{mythm}

We also have a partial classification of acyclic cellular fake surfaces of complexity 5: we classify those that do not have disks of boundary length 1 and 2, referred to as small disks.

The classification for complexity 1 is in \cite{ikeda1971acyclic}.  Other cases are new.

\begin{mycor}
\label{cor:contractible}
Every acyclic cellular fake surface of complexity less than 5 is contractible.
\end{mycor}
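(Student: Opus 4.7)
The plan is to deduce the corollary directly from the finite list produced by Theorem \ref{thm:classification}. Since a cellular fake surface $F$ is a finite $2$-dimensional CW complex, Whitehead's theorem tells us that contractibility is equivalent to $\pi_1(F) = 1$ together with $\widetilde{H}_*(F)=0$. Acyclicity gives the homological half for free, so the entire task reduces to checking $\pi_1(F)=1$ for every representative on the classification list of complexities $1,2,3,4$.

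First I would fix, for each acyclic cellular fake surface $F$ in the classification, an explicit CW structure coming from the intrinsic stratification: vertices are the type 2 singularities (of which there are $c(F)\le 4$), edges are the triple-arcs, and $2$-cells are the open regular faces, with attaching maps recorded by the combinatorial data that indexes the classification. From this CW structure I read off the standard presentation of $\pi_1(F)$: choose a spanning tree $T$ in the $1$-skeleton, take generators to be the edges not in $T$, and take one relator for each $2$-cell, given by the word traced around its attaching circle after collapsing $T$.

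Next I would verify, case by case, that each of these finitely many group presentations presents the trivial group. Acyclicity forces the deficiency and abelianization data to be consistent with triviality, so in each case the relators abelianize to a basis of $\mathbb{Z}^{g}$ where $g$ is the number of generators; concretely this means one only has to exhibit Tietze reductions (or AC-moves) taking each presentation to the empty one. For $c(F)\le 4$ the total number of generators and relators is small (at most a handful in each case), so these reductions can be carried out by hand or checked with a computer algebra system. Once every presentation in the classification is shown to be trivial, $\pi_1(F)=1$ is established uniformly, and combined with acyclicity Whitehead's theorem finishes the proof.

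The main obstacle is the last step: showing a balanced presentation is trivial is precisely an instance of the Andrews--Curtis problem, which is unsolved in general and can be hard even for small presentations. What makes the corollary tractable at complexity $\le 4$ is that the presentations arising are short and structurally constrained by the local combinatorics of fake surfaces (every edge is incident to exactly three face-corners), so in practice one can always find explicit AC-reductions. I would therefore expect the bulk of the work in writing up the proof to be organizing the case analysis efficiently and exhibiting the trivializing sequence of moves for each case produced by Theorem \ref{thm:classification}.
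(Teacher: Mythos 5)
Your proposal matches the paper's argument: both reduce contractibility to simple connectivity via the Whitehead and Hurewicz theorems (acyclicity handling homology), read off a presentation of $\pi_1$ by collapsing a maximal tree in the intrinsic $1$-skeleton and taking one relator per $2$-cell, and then verify triviality of each presentation on the classification list by computer (the paper uses Sage's group simplification, which is your Tietze-reduction step). The only cosmetic difference is that the paper notes the undecidability issue can only produce false negatives, whereas you frame the same concern via the Andrews--Curtis problem; the substance is identical.
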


The case of complexity 1, 2 and 3 are in \cite{ikeda1971acyclic,robertson1973fake}. This is not true for complexity 5 as a spine of the Poincar\'e homology sphere shows.

We also prove:
\begin{mythm}
\label{thm:embedded-disk}
Each contractible cellular fake surface of complexity less than 6 has an embedded disk. 
\end{mythm}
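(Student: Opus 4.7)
\emph{Proof plan.} The plan is to reduce to the classification of Theorem~\ref{thm:classification} wherever possible and handle the remaining complexity-$5$ cases by a direct combinatorial argument. Since contractibility implies acyclicity, any $F$ as in the theorem is an acyclic cellular fake surface with $c(F)\le 5$, so every case is covered by Theorem~\ref{thm:classification} except a contractible complexity-$5$ surface that contains a small disk.

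For $c(F)\le 4$ I would walk through the finite list of acyclic cellular fake surfaces produced by Theorem~\ref{thm:classification} and, for each surface, exhibit a $2$-cell whose boundary word in the $1$-skeleton is a simple closed curve; the attaching map $S^{1}\to F^{(1)}$ is then injective, and standard CW-complex arguments promote this to injectivity of the characteristic map $D^{2}\to F$. Since Theorem~\ref{thm:classification} records these surfaces explicitly, together with their face-boundary data, this is a finite bookkeeping check, and the same check handles those acyclic complexity-$5$ surfaces that have no small disks.

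The remaining case is a contractible $F$ with $c(F)=5$ containing a $2$-cell $D$ of boundary length $1$ or $2$. A length-$1$ boundary word forces its unique edge to be a loop and makes the attaching map a homeomorphism onto that loop, so $D$ is embedded. A length-$2$ word $ab$ with $a\neq b$ gives a bigon, and $D$ is again embedded. The delicate case is a length-$2$ word of the form $aa$ or $aa^{-1}$, where a single edge $a$ absorbs two of the three sheets incident to it. Here I would exploit the rigid local combinatorics of a cellular fake surface --- four edges at each vertex, three faces along each edge, with Euler-characteristic counts $F_{2}=1+c(F)=6$, $E=2c(F)=10$, and total boundary length $3E=30$ --- together with acyclicity, to reduce the local configuration around $a$ to a short list of possibilities, each of which either contradicts acyclicity or forces some \emph{other} $2$-cell to have an injective boundary. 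This degenerate-attachment case is the main technical obstacle of the proof; everything else is a tabulated check using Theorem~\ref{thm:classification}.
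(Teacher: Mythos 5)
Your overall route is the same as the paper's: everything except a contractible complexity-$5$ surface containing a small disk is settled by inspecting the lists behind Theorem~\ref{thm:classification}, and the remaining case is handled by arguing that the small disk itself is embedded. The problem is the step you defer. You isolate the boundary words $aa$ and $aa^{-1}$ as ``the main technical obstacle'' and offer only a plan (Euler-characteristic counts, a case analysis around the edge $a$, an appeal to acyclicity); that plan is not an argument, and as stated it is aimed at the wrong target --- you are trying to show the configuration contradicts acyclicity or produces some \emph{other} embedded disk, when in fact the configuration cannot occur at all, for purely local reasons that have nothing to do with acyclicity or with complexity $5$.

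Concretely: the link of a type~2 singularity is the $1$-skeleton of the tetrahedron, i.e.\ the simple graph $K_4$ on the four edge-germs at that vertex, and each corner of a $2$-cell at the vertex is an edge of $K_4$ joining two \emph{distinct} germs, with each of the six edges of $K_4$ realized by exactly one corner. A boundary word $aa^{-1}$ forces a cusp --- a corner whose two sides lie on the same germ of $a$ --- which would be a loop in $K_4$; a boundary word $aa$ (so $a$ is a loop at a vertex $v$) forces two corners both joining the head-germ to the tail-germ of $a$, i.e.\ a doubled edge in $K_4$. Both are impossible, so every length-$2$ disk traverses two distinct edges, and since no vertex of a connected $4$-regular graph with more than one vertex carries two loops, those two edges join two distinct vertices and the bigon is embedded (this last point is also glossed over in your ``$ab$ with $a\neq b$'' case, though it is harmless for complexity $5$). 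This is exactly the content of the paper's one-line assertion that small disks are necessarily embedded in complexity greater than $1$; once you supply it, your degenerate-attachment case is vacuous and the proof closes.
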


The contents of the paper is the following:  in Section~\ref{sec:generic-polyhedra}, we recall the basic notions of generic polyhedra and fake surfaces.  We derive some interesting properties of fake surfaces.  We observe that the analogue of one-dimensional bordism groups is trivial as well. In Section~\ref{sec:classification}, we discuss the classification of acyclic cellular fake surfaces and present the classification of complexity 1 from \cite{ikeda1971acyclic}. In Section~\ref{sec:conjs}, we deduce the contractibility conjecture for complexity 4, and the embedded disk conjecture up to complexity 5 from our classification. 
Appendix~\ref{sec:code} contains the computer code to check that an acyclic cellular fake surface has trivial fundamental group. In Appendix~\ref{sec:fs-c23}, we list all contractible fake surfaces of complexity 2, and give a link to the list of all fake surfaces of complexity 3 and 4, and complexity 5 without small disks. 

\section{Generic polyhedra and singular manifolds}
\label{sec:generic-polyhedra}
A polyhedron $P$ is a subset of some Euclidean space $\mathbb{R}^N$ such that each point $x\in P$ has a conical neighborhood $xK$ for a compact subset $K$, called the link of $x$.  A generic polyhedron $P$ is one such that the links $K$ are of a few special types. 

\subsection{Generic $n$-polyhedra and $n$-singular manifolds}

Let $\Delta^n$ be the standard $n$-simplex with $(n+1)$ vertices and length=1 edges, and $\Delta^n_k$ its $k$-skeleton for $0\leq k\leq n$. Let $\Pi^n_k=\textrm{Cone}(\Delta^{k+1}_{k-1})\times I^{n-k}, 0\leq k\leq n$ be the thickened cone on the codimension 2 skeleton of the $(k+1)$-simplex $\Delta^{k+1}$.  When $k=0$,  $\Pi^n_0$ is simply the $n$-cube $I^n$.

\begin{mydef}
\label{def:generic-polyhedron}
A generic $n$-polyhderon $P$ is a polyhedron such that each point $x\in P$ has a neighborhood that is isomorphic to the $\epsilon$-neighborhood of any point of the $\Pi^n_k$ for some $0<\epsilon <\frac{1}{10},0\leq k\leq n$. 

Points with neighborhoods $\Pi^n_0$ will be called manifold points and are not singularities.  We will call points with neighborhoods $\Pi^n_k, 1\leq k\leq n,$ singularities of type $k$.

A $n$-singular manifold $M$ will be a generic $n$-polyhedron up to PL isomorphism.
\end{mydef}

In our terminology, an $n$-singular manifold have $n$ types of singularities besides the manifold points.

\subsection{Fake surfaces}

\begin{mydef}
\label{def:fake-surface}
A compact 2-singular manifold $F$ will be called a fake surface. The complexity of a fake surface is the number of type 2 singularities.

\end{mydef}

Every fake surface has an intrinsic stratification: the two-dimensional stratum consists of manifold points, the one-dimensional stratum of singularities of type 1, and the points of singularities of type 2.  The number of type 2 singularities of a fake surface is finite.  The type 1 singularities form a 4-regular multigraph, not necessarily connected. A fake surface without any singularities is a surface in the usual sense.

\begin{mydef}
\label{def:cellular-planar}
A fake surface is called cellular if the intrinsic stratification is a CW complex, i.e. all connected open components of the intrinsic strata are disks.

A fake surface is called planar if every edge has at least one type 2 singularity, and every open 2-cell is planar, i.e. of genus 0.  
\end{mydef}
A planar fake surface is not necessarily cellular, as there could be annuli in the 2-skeleton.
\begin{myprop}
\begin{enumerate}
\item A contractible fake surface is cellular.
\item Every cellular fake surface has a connected intrinsic 1-skeleton.
\end{enumerate}
\end{myprop}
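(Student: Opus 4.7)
The plan is to handle the two parts of the proposition separately, as they require quite different arguments: part (2) admits a quick connectedness argument, while part (1) requires a more delicate homological analysis to rule out non-cellular local structure.

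For part (2), I would argue by contradiction on a connected $F$. If the intrinsic 1-skeleton $F^{(1)}$ had connected components $G_1, \ldots, G_m$ with $m \geq 2$, then by cellularity every 2-cell $D$ is an open disk whose attaching map $\partial D \to F^{(1)}$ has connected image (being a continuous image of $S^1$), hence lies in exactly one $G_i$. Partitioning the 2-cells accordingly yields $F = F_1 \sqcup \cdots \sqcup F_m$ with $F_i = G_i \cup \{\text{2-cells attached to } G_i\}$, a disjoint decomposition contradicting connectedness.

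For part (1), $F$ is contractible, hence connected with trivial reduced homology, and I must rule out two failures of cellularity: non-disk 2-stratum components, and 1-stratum components that are vertex-free circles. The easy sub-case is when a 2-stratum component $C$ is a closed surface, for then $\bar C = C$ is a connected component of $F$, forcing $F = C$; but no closed surface is contractible. For the remaining sub-cases --- non-disk open 2-strata with ends, and vertex-free 1-stratum circles --- the non-cellular piece admits a regular neighborhood with non-trivial first homology: e.g.\ the core cycle of an annular 2-stratum, or in the case of a vertex-free type-1 circle $S$ the circle itself, since locally $F$ looks like $Y \times \mathbb{R}$ (a tripod times a line) near $S$ and hence the regular neighborhood $N(S)$ is a tripod-bundle over $S$ that deformation retracts to $S$.

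The main obstacle will be promoting this local non-triviality to a contradiction with $H_1(F) = 0$, since a naive Mayer--Vietoris can fail when the boundary circles of the neighborhood themselves represent the core class. I would therefore also attempt a direct Euler-characteristic computation: if $v$ denotes the number of type-2 vertices, then 4-regularity of the 1-skeleton forces the number of open-interval 1-cells to be $2v$, and summing compactly supported Euler characteristics stratum by stratum gives $\chi(F) = -v + \sum_C \chi_c(C) = 1$. Combined with the pointwise bound $\chi_c(C) \leq 1$ (equality iff $C$ is a disk) and the edge-sheet incidence identity $\sum_C (\text{boundary length of }C) = 3e$, this should force every 2-stratum to be a disk, and a parallel analysis rules out vertex-free 1-stratum circles.
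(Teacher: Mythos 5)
The paper offers no argument here (it simply declares both statements straightforward), so the only question is whether your proposal is correct. Your part (2) is fine: each closed $2$-cell has connected frontier contained in the $1$-skeleton, so the cells sort themselves by component of $F^{(1)}$ and a disconnected $1$-skeleton would disconnect $F$. Your treatment of closed-surface $2$-strata in part (1) is also fine.

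The rest of part (1), however, has a genuine gap, and your fallback cannot be repaired. The Euler-characteristic computation only yields the single equation $\sum_C \chi_c(C) = v+1$ together with the pointwise bound $\chi_c(C)\leq 1$; without an \emph{upper} bound on the number of $2$-strata this forces nothing, and the incidence identity $\sum_C(\text{boundary length})=3e$ does not supply one. Concretely, let $F = T^2 \cup_{\gamma} D^2$ where $\gamma$ is a nonseparating simple closed curve: this is a fake surface (three sheets along $\gamma$, no type-$2$ points) with $\chi(F) = 0 + 1 - 0 = 1$, satisfying every identity you list, yet its strata are a circle, an open annulus, and a disk. So $\chi(F)=1$ plus local structure does not imply cellularity; the hypothesis of contractibility must enter through $H_1(F)=0$ and $H_2(F)=0$ \emph{separately}, not through their alternating sum. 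The argument that actually closes this (and simultaneously handles vertex-free circles in the $1$-stratum and non-orientable or higher-genus strata) is a single Mayer--Vietoris decomposition $F = N(F^{(1)}) \cup \bigsqcup_j \hat C_j$ along a regular neighborhood of the whole singular graph: vanishing of $H_2(F)$ and $H_1(F)$ makes $H_1(\bigsqcup \partial\hat C_j) \to H_1(F^{(1)}) \oplus \bigoplus_j H_1(\hat C_j)$ an isomorphism (use $\mathbb{Z}/2$ coefficients to deal with Möbius-type strata), and surjectivity onto each factor $H_1(\hat C_j)$ forces each $\hat C_j$ to be planar and orientable, after which the injectivity and a rank count pin down one boundary component per stratum and exclude circle components of $F^{(1)}$. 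Your proposal gestures at a neighborhood of a \emph{single} bad stratum and correctly anticipates that its core class may die in $H_1(F)$ (in the example above $[\gamma]=0$ since $\gamma$ bounds $D$), but it does not supply the global decomposition that resolves this, so as written the proof of part (1) is incomplete.
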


Both statements are straightforward.

\subsection{$T$-bundles and spines}

A $T$-bundle can be constructed from the boundary of a 2-cell as shown in \cite{matveev2007algorithmic}. The $T$-bundles can be trivial, a product of $S^1$ with $T$, or nontrivial, a disk attached to the middle circle of a M\"obius band. A nontrivial $T$-bundle in a fake surface is the obstruction to be a spine.

\subsection{Non-spine fake surfaces}

Generic $n$-polyhedra arise as dual celluation of triangulations of $(n+1)$-manifolds and spines of $(n+1)$-manifolds. For $n\geq 3$, every generic $n$-polyhedron is a spine of some manifold \cite{matveev1973special}. The salient feature of dimension two is that most contractible fake surfaces are non-spines, as our classification shows. This is summarized in Table~\ref{tab:my_table}. The numbers of contractible fake surfaces $\{C_n\}$ of complexity $n$ form an interesting sequence, which has not appeared elsewhere \cite{cfs}.  We also observe that as the complexity increases, the fraction of surfaces that are spines decreases. 
\begin{table}[h]
\centering
\captionsetup{justification=centering, width=.8\textwidth}
\caption{Number of Contractible Fake Surfaces by Complexity and Count of Disks with Non-Trivial $T$-Bundles}
\label{tab:my_table} 
\begin{tabular}{c|cccccc|c|c} 
\toprule
\multirow{2}{*}{\textbf{Complexity}} & \textbf{Spines} & \multicolumn{5}{c|}{\textbf{Non-spines}} & \multirow{2}{*}{\textbf{Total}} & \multirow{2}{*}{\textbf{\% Spines}} \\ 
\cmidrule(lr){2-2} \cmidrule(lr){3-7}
 & \textbf{0} & \textbf{1} & \textbf{2} & \textbf{3} & \textbf{4} & \textbf{5} & & \\
\midrule
1	& 1	& 1	& 0		&	&	&	& 2 & 50 \\
2	& 3	& 6	& 6		& 2 	&	&	& 17 & 16.7 \\
3	& 20	& 54	& 89		& 62	& 13	&	& 238 & 8.4 \\
4	& 128	& 607	& 1450	&1533	& 745	& 155	& 4618 & 2.8\\
\bottomrule
\end{tabular}
\end{table}
This provides evidence for the following conjecture:

\begin{myconj}
\label{conj:spines}
The ratio of spine to non-spine contractible fake surfaces of complexity $n$ goes to 0 as the complexity $n$ goes to infinity.
\end{myconj}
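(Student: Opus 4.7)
The plan rests on the empirical structure visible in Table~\ref{tab:my_table}: for complexity $n$, the column labeled by $k$ non-trivial T-bundles contains roughly $\binom{n+1}{k}$ times a common scale (at complexity $4$ the ratios $128:607:1450:1533:745:155$ track the binomial $1:5:10:10:5:1$ up to moderate corrections). Combinatorially this is natural: by $\chi=1$ and the $4$-valence of type-$2$ vertices, an acyclic cellular fake surface of complexity $n$ has exactly $n+1$ two-cells, and being a spine corresponds to the column $k=0$. If the T-bundle parities on the $n+1$ two-cells are, to leading order, uniform and independent, then the spine ratio behaves like $2^{-(n+1)}$ and in particular tends to $0$, which is what we want.

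First I would make this heuristic into a combinatorial statement. For each open 2-cell $D$ of a cellular fake surface $F$, define a local \emph{T-bundle twist} $\tau_D$ that modifies $F$ in a regular neighborhood of $D$ by passing the transverse $I$-bundle of $D$ through a M\"obius band; $\tau_D$ flips the T-bundle parity of $D$ while preserving the parities of all other $2$-cells and the underlying intrinsic CW structure. If the $\tau_D$ are commuting involutions, they generate a free $\mathbb{F}_2^{n+1}$-action on the set of cellular fake surfaces sharing a fixed combinatorial skeleton, with a unique spine representative per orbit.

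The main obstacle is that $\tau_D$ need not preserve the homotopy type of $F$, so it need not preserve acyclicity. I would attack this by expressing $\tau_D$ as a composition of Matveev-type $T$- and $L$-moves which are known to be simple-homotopy equivalences; alternatively one could identify the T-bundle parities with $\mathbb{F}_2$-cochains in $C^2(F;\mathbb{F}_2)$ and compute how the cellular $\mathbb{Z}$-chain complex of $F$ changes under a twist, showing that the effect on $H_1(F;\mathbb{Z})$ is controlled by a bounded amount of local data near $D$. Even a weaker statement suffices: if a uniformly positive fraction of the $2^{n+1}$ twists of a given acyclic $F$ remain acyclic, one still obtains (spines)/(acyclics) $= O(2^{-n})$.

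Finally, the enumerations in Table~\ref{tab:my_table} are up to PL-isomorphism, so one must check that passing to isomorphism classes does not destroy the estimate. Since any automorphism of $F$ acts on its $\mathbb{F}_2^{n+1}$-orbit and fixes the unique spine representative, orbit contraction can only \textbf{decrease} the spine ratio, never increase it, and the exponential bound survives the quotient. The delicate step which I expect to take most of the work, and which controls any quantitative refinement of Conjecture~\ref{conj:spines}, is the acyclicity-preservation of $\tau_D$.
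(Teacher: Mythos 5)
The statement you are addressing is labelled a \emph{conjecture} in the paper: the authors offer no proof, only the empirical evidence of Table~\ref{tab:my_table} together with the observed decay of the ``\% Spines'' column. So there is no argument in the paper to compare yours against, and your proposal should be judged purely on whether it closes the gap. It does not. The central object, the local twist $\tau_D$, is asserted rather than constructed: the $T$-bundle parity of a $2$-cell is read off from how the three sheets are glued along the triple edges traversed by $\partial D$, and that gluing data is shared with the other $2$-cells incident to those same edges. It is therefore not an independent bit attached to $D$ that you can flip ``while preserving the parities of all other $2$-cells and the underlying intrinsic CW structure''; any modification of the attaching data along an edge of $\partial D$ is visible to the other disks running through that edge. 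You also leave the acyclicity-preservation step open by your own admission, and the weakened version (``a uniformly positive fraction of the $2^{n+1}$ twists remain acyclic'') is just as unproven.

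Two further points tell against the picture even as a heuristic. First, a free parity-flipping $\mathbb{F}_2^{n+1}$-action preserving acyclicity would force the column counts in Table~\ref{tab:my_table} to be symmetric under $k\leftrightarrow n+1-k$, but they are not: $128$ versus $155$ at complexity $4$, and $20$ versus $13$ at complexity $3$. Second, your isomorphism-quotient step has the inequality backwards: if automorphisms identify non-spines within an orbit while the unique spine remains a single isomorphism class, the spine-to-non-spine ratio \emph{increases} under the quotient, so you would additionally need the automorphism groups to have order $o(2^{n})$, which you do not address. The empirical trend in the table is genuine evidence for the conjecture, but your plan does not upgrade it to a proof, and its key construction is likely not well defined as stated.
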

   
\subsection{Embedded disks}

\begin{mydef}
\label{def:embedded-disk}
    Given a cellular fake surface, a closed 2-cell in the intrinsic celluation is called an embedded disk if it is an embedded closed disk including all vertices and edges on the boundary.
\end{mydef}

\subsection{Checking For Embedded Disks and $T$-bundles}

In order to check that a 2-cell is embedded, we simply check that each vertex on the boundary of the 2-cell is unique. If a 2-cell is not an embedded disk, the boundary must cross a single vertex twice. 

In order to check if the $T$-bundle of a 2-cell is trivial, we check the behavior along the boundary of the 2-cell. We demonstrate this in the case of an embedded disk, which most lends itself to visualization. Around an embedded disk, it looks as in Figure~\ref{fig:my_hexagon} without loss of generality. (The neighborhood of the embedded disk also contains 2-cells along the vertical lines that are not shown in the figure.) There are two possibilities for the third 2-cell attached along the red/blue edge of the embedded disk: either a disk is attached along the blue line, or along the red line. The former gives us a trivial $T$-bundle, and the latter a nontrivial $T$-bundle. 

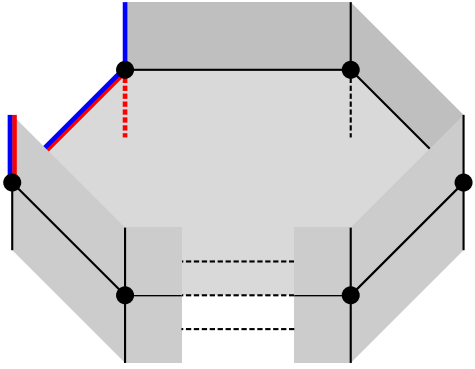
\begin{figure}[h] 
\centering
\begin{tikzpicture}[scale=3, dashed/.style={dash pattern=on 2pt off 1pt}, thick] 

\pgfmathsetmacro{\edgeLength}{0.3} 

\fill[lightgray] (0.5,0.5-\edgeLength) -- (0.5,0.5) -- (1.5,0.5) -- (1.5,0.5-\edgeLength) -- cycle; 
\fill[lightgray] (1.5,0.5-\edgeLength) -- (1.5,0.5) -- (2,0) -- (2,-\edgeLength) -- cycle; 
\fill[lightgray!80] (0,0-\edgeLength) -- (0.5,-0.5-\edgeLength) -- (0.5,-0.5) -- (0,0) -- cycle; 
\fill[lightgray!80] (0.75,-0.5-\edgeLength) -- (0.5,-0.5-\edgeLength) -- (0.5,-0.5) -- (0.75,-0.5) -- cycle;  
\fill[lightgray!80] (2,0-\edgeLength) -- (1.5,-0.5-\edgeLength) -- (1.5,-0.5) -- (2,0) -- cycle; 
\fill[lightgray!80] (1.25,-0.5-\edgeLength) -- (1.5,-0.5-\edgeLength) -- (1.5,-0.5) -- (1.25,-0.5) -- cycle;

\fill[lightgray] (0.5,0.5) -- (0.5,0.5+\edgeLength) -- (1.5,0.5+\edgeLength) -- (1.5,0.5) -- cycle; 
\fill[lightgray] (1.5,0.5+\edgeLength) -- (1.5,0.5) -- (2,0) -- (2,\edgeLength) -- cycle; 
\fill[lightgray!60, opacity=0.92] (0,0) -- (0.5,0.5) -- (1.5,0.5) -- (2,0) -- (1.5,-0.5) -- (0.5,-0.5) -- cycle;

\draw (0,0) -- (0.5,0.5) -- (1.5,0.5) -- (2,0) -- (1.5,-0.5); 
\draw[dashed] (1.25,-0.5) -- (0.75,-0.5);
\draw (1.5,-0.5) -- (1.25,-0.5);
\draw (0.75,-0.5) -- (0.5,-0.5);
\draw[dashed] (1.25,-0.5+\edgeLength/2) -- (0.75,-0.5+\edgeLength/2);
\draw[dashed] (1.25,-0.5-\edgeLength/2) -- (0.75,-0.5-\edgeLength/2);

\draw[red,shift={(0.01,0)},line width=1.75pt] (0,0) -- (0.5,0.5);
\draw[blue,shift={(-0.01,0)},line width=1.75pt] (0,0) -- (0.5,0.5);

\draw[red,dashed,line width=1.75pt] (0.5,0.5) -- (0.5,0.5-\edgeLength);
\draw[blue,line width=1.75pt] (0.5,0.5) -- (0.5,0.5+\edgeLength);

\fill[lightgray!80] (0,0) -- (0.5,-0.5) -- (0.5,-0.5+\edgeLength) -- (0,0+\edgeLength) -- cycle;  
\fill[lightgray!80] (0.75,-0.5) -- (0.5,-0.5) -- (0.5,-0.5+\edgeLength) -- (0.75,-0.5+\edgeLength) -- cycle;  
\fill[lightgray!80] (2,0) -- (1.5,-0.5) -- (1.5,-0.5+\edgeLength) -- (2,0+\edgeLength) -- cycle;  
\fill[lightgray!80] (1.25,-0.5) -- (1.5,-0.5) -- (1.5,-0.5+\edgeLength) -- (1.25,-0.5+\edgeLength) -- cycle;  
\draw (0.5,-0.5) -- (0,0); 
\draw (2,0) -- (1.5,-0.5);

\foreach \x/\y in {2/0, 1.5/-0.5, 0.5/-0.5} { 
  \filldraw (\x,\y) circle (1pt); 
  \draw (\x,\y) -- (\x, \y + \edgeLength); 
  \draw (\x,\y) -- (\x, \y - \edgeLength); 
}
\draw (0,0) -- (0,0-\edgeLength);
\draw[red,shift={(0.01,0)},line width=1.75pt] (0,0) -- (0,0+\edgeLength);
\draw[blue,shift={(-0.01,0)},line width=1.75pt] (0,0) -- (0,0+\edgeLength);

\filldraw (0.5,0.5) circle (1pt);
\filldraw (0,0) circle (1pt);
\foreach \x/\y in {1.5/0.5} { 
  \filldraw (\x,\y) circle (1pt); 
  \draw (\x,\y) -- (\x, \y + \edgeLength); 
  \draw[dashed] (\x,\y) -- (\x, \y - \edgeLength); 
}
\end{tikzpicture}
\caption{Part of a Neighborhood of an Embedded Disk in a Fake Surface}
\label{fig:my_hexagon}
\end{figure}

\subsection{Algorithms for deforming 2-complexes into fake surfaces}

Given a 2-complex $K$, there are several algorithms in the literature to change $K$ into a fake surface $P_K$ by $3$-deformations.  Many are based on the so-called banana-pineapple trick from Remark 2, Page 26 of \cite{zeeman1963seminar} for spines of 3-manifolds: ``For choose a spine in the interior; expand each edge like a banana and collapse from one side; then expand each vertex like a pineapple and collapse from one face."  There is also a separate paper \cite{wright1973formal}.  We will make an estimate of the complexity of the resulting fake surface following the banana-pineapple trick in the case of the standard 2-complex realization of a group. 

\begin{myprop}
\label{prop:complexity}
Let $G=\langle x_1,\ldots,x_k \mid r_1, \ldots, r_\ell \rangle$ be a group presentation. Let $n_{x_i}$ be the number of occurrences of $x_i$ across all the relators. Let $L=\sum_i n_{x_i}$ and without loss of generality assume $n_{x_1}\geq n_{x_i}$ for all $i$. Then applying banana-pineapple to the standard 2-complex realization of $G$ gives a fake surface of complexity $2L-4k-n_{x_1}+2$.
\end{myprop}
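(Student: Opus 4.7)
The plan is to trace the banana-pineapple construction step by step through the standard $2$-complex $K_G$ and keep a running count of the type 2 singularities that appear. Recall that $K_G$ has a single $0$-cell $v$, $k$ loop $1$-cells $e_1,\ldots,e_k$ corresponding to the generators $x_i$, and $\ell$ $2$-cells $D_1,\ldots,D_\ell$ attached via the relator words $r_j$. By construction, the $1$-cell $e_i$ appears $n_{x_i}$ times in the union of the boundary words, so the total number of face-to-edge incidences is $L=\sum_i n_{x_i}$, and each $e_i$ is the common edge of $n_{x_i}$ transversely meeting half-pages in $K_G$.

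First I would apply the banana move to each $1$-cell $e_i$. Locally the $n_{x_i}$ half-pages stacked along $e_i$ are replaced by a ``banana'' (two disks glued along $e_i$) with a prescribed collapse from one side; the upshot is a triple-line along the interior of $e_i$ together with new type 2 singularities at the two endpoints where the banana meets the vertex $v$. A careful local count gives $2n_{x_i}$ new type 2 singularities per generator, for a total contribution of $2L$. Next I would apply the pineapple move at the unique $0$-cell $v$: after all the bananas are in place, the link of $v$ is a prescribed graph whose vertices correspond to the banana-ends, and the pineapple replaces a conical neighborhood of $v$ by the standard pineapple polyhedron and collapses from one face. This step introduces the correction $-4k$, because each of the $k$ pairs of banana-ends at $v$ absorbs $4$ of the previously counted type 2 singularities into the pineapple structure.

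The remaining contribution $-n_{x_1}+2$ comes from the freedom to choose which face the pineapple is collapsed from. By choosing the face corresponding to the generator $e_1$ with maximal multiplicity $n_{x_1}$, the $n_{x_1}$ occurrences of $x_1$ in the relators do not each produce a new vertex, and the $+2$ is a boundary correction corresponding to the two ends of $e_1$ being handled by the collapse rather than by additional cone points. Since $n_{x_1}$ is the maximum of the $n_{x_i}$, this choice minimizes the complexity of the resulting fake surface. Summing the three contributions gives $2L-4k-n_{x_1}+2$.

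The main obstacle will be the local analysis at $v$: the banana and pineapple moves interact, and the constants $-4k$ and $+2$ must be read off from a careful inspection of the link of $v$ after the bananas are installed, rather than from a global Euler-characteristic argument. I would double-check the formula on the sanity cases $\langle x\mid x^3\rangle$ (dunce cap, complexity $2\cdot 3-4-3+2=1$), $\langle x,y\mid xyx^{-1}y^{-1}\rangle$ (torus, complexity $8-8-2+2=0$, consistent with the torus being a surface), and a presentation with a long single relator to pin down the role of the $n_{x_1}$ term.
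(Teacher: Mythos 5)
Your overall strategy---trace the banana moves on the edges, then the pineapple move at the vertex, and keep a running count---is the same as the paper's, and your three-term decomposition $2L-4k-(n_{x_1}-2)$ is arithmetically identical to the paper's count $\sum_{i}2(n_{x_i}-2)-(n_{x_1}-2)$. But the mechanisms you attach to the first two terms are not the right ones, and the ``careful local count'' you defer is exactly where the content of the proof lives. The banana move on $e_i$ does not create $2n_{x_i}$ type 2 singularities; it creates $n_{x_i}-2$ new triple \emph{edges}, because each of the $n_{x_i}$ pages attached along $e_i$ gives rise to a new triple line except the two pages that border the longitudinal strip of peel through which the banana is collapsed. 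No type 2 singularities exist yet at this stage (the $0$-cell $v$ is not a generic point until the pineapple move is performed), so there is nothing for the pineapple to ``absorb.'' The $-4k$ is not a correction applied at the pineapple step: it is already built into the banana step, since the two suppressed pages per generator would otherwise have contributed $2$ edges, hence $2\times 2=4$ edge-ends at $v$, per generator. All the type 2 singularities then appear at the pineapple step, one for each end of each new triple edge incident to $v$, i.e.\ $\sum_i 2(n_{x_i}-2)$ of them, minus those bordering the face of the pineapple peel chosen for the collapse.

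Your account of the final term is closest to correct, but the count is off in the same way: choosing the collapse face of the pineapple to sit in the middle of the $n_{x_1}-2$ edges coming from one side of the $x_1$ banana saves exactly those $n_{x_1}-2$ vertices (not ``the $n_{x_1}$ occurrences of $x_1$'' with a separate $+2$ boundary correction); this is why maximality of $n_{x_1}$ is invoked and why the term is $-(n_{x_1}-2)$ on the nose. So while your formula and your sanity checks come out right, the proposal as written reverse-engineers the three summands and justifies two of them by incorrect local mechanisms; to close the gap you need to actually perform the local analysis of the collapsed banana (counting new triple edges, not vertices) and of the link of $v$ after the bananas are installed (counting edge-ends, minus those adjacent to the collapsed pineapple face).
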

\begin{proof}
    Consider the edge corresponding to $x_i$ in the standard 2-complex realization of $G$. Thickening it to a banana and collapsing through a longitudinal piece of the peel creates $n_{x_i}-2$ new edges, as each of the $n_{x_i}$ two-cells attached to that edge create a new edge except for the two that border the collapsed longitudinal piece of the peel. Now thickening the vertex and collapsing the resulting pineapple through a piece of the peel creates a new vertex for each of the $\sum_{i=1}^k 2(n_{x_i}-2)$ edges (where we multiply by 2 because each edge appears twice in a neighborhood of the vertex), except those that border the piece of the peel from which we collapse. In order to make the complexity as small as possible, we choose to collapse from the piece of the peel in the middle of the $n_{x_1}-2$ edges coming from one side of the original $x_1$ edge, thus giving $\sum_{i=1}^k 2(n_{x_i}-2)-(n_{x_1}-2)$ vertices. Simplifying gives $2L-4k-n_{x_1}+2$. 
\end{proof}

\subsection{Mapping class groups of $2$-singular manifolds}

Given a cellular fake surface $F$, the group of PL homeomorphims up to isotopy will be called the mapping class group of $F$.  Unlike the mapping class group of surfaces, they are always finite groups.

\subsection{Bordism groups of $2$-singular manifolds}

The notion of $n$-singular manifolds with boundaries can also be defined generalizing the case of fake surfaces in \cite{ikeda1971acyclic}. It is natural to ask what the analogous bordism groups are. 
\begin{mydef}
\label{def:fake-surface-with-boundary}
A fake surface with boundary is a 2-polyhedron that could contain points as shown in Fig.~\ref{fig:a} in addition to the ones in Definition \ref{def:fake-surface}.

\end{mydef}

We prove the simplest case that the bordism group of $1$-singular manifolds is trivial, i.e. every $1$-singular manifold bounds a fake surface with boundary.

\begin{figure}[h] 
  \centering
\begin{tikzpicture}
\begin{scope}[shift={(-3,0)}]

\draw[thick] (1.5,0.5)--(0.5,-0.5);
\draw (0.5,-0.5)--(1.5,-0.5);
\draw (1.5,-0.5)--(2.5,0.5);
\draw (1.5,0.5)--(2.5,0.5);

\filldraw (1,0) circle(1pt);

\end{scope}

\draw (1,0)--(2,0);
\draw[dashed,thick] (1,0)--(1.5,0.5);
\draw[thick] (1,0)--(1,1);
\draw (1,1)--(2,1);
\draw[thick] (1,0)--(0.5,-0.5);
\draw (0.5,-0.5)--(1.5,-0.5);
\draw (1.5,-0.5)--(2,0);
\draw[dashed] (1.5,0.5)--(2,0.5);
\draw (2,1)--(2,0);
\draw (2,0.5)--(2.5,0.5);
\draw (2.5,0.5)--(2,0);
\filldraw (1,0) circle(1pt);

\begin{scope}[shift={(4,0)}]
\draw (0,0)--(1,0);
\draw[dashed] (0,0)--(0.5,0.5);
\draw (0,0)--(0,1);
\draw (0,1)--(1,1);
\draw (0,0)--(-0.5,-0.5);
\draw (-0.5,-0.5)--(1.5,-0.5);
\draw (1.5,-0.5)--(2,0);
\draw[dashed] (0.5,0.5)--(1,0.5);
\draw (1,0.5)--(2.5,0.5);
\draw (2.5,0.5)--(2,0);
\draw[thick] (1,1)--(1,0);
\filldraw (1,0) circle(1pt);

\end{scope}
\end{tikzpicture}
\caption{Boundary points}
    \label{fig:a}

\end{figure}

\begin{myprop}
Any finite trivalent graph is the boundary of a fake surface.
\end{myprop}
\begin{proof}
Given any finite trivalent graph $T$ in $\mathbb{R}^3$, we can construct a fake surface whose boundary is $T$ as follows. First we take the one-point cone $C$ over $T$ and denote the cone point by $P$. Define an immersion $i:C\longrightarrow\mathbb{R}^3$ as follows. Since $T$ is finite, after appropriate isotopy we can project $T$ to one nearby plane, such that no two vertices are projected to the same point, any vertex is not projected to the interior of any edge, the projection of any edge does not intersect itself in the interior, and the projections of any two edges either intersect each other transversely or do not intersect. Choose one point $Q$ far from the plane. Connect points on $T$ and $Q$ by lines as shown in Fig.~\ref{fig:enter-label}. This gives an immersion $i:C\longrightarrow\mathbb{R}^3$ where $i(P)=Q$. We get that the cone of $Q$ over an edge of $T$ does not intersect itself and the intersections of two such cones may be some arcs and circles. Suppose that $B$ is a 3-ball containing $Q$ in $\mathbb{R}^3$ such that $B$ does not intersect $T$ and let $S$ be the boundary of $B$. Then the intersections between $i(C)$ and $S$ give a graph $R$ which contains only trivalent vertices and 4-valent vertices as shown by the red graph in Fig.~\ref{fig:enter-label}. Each trivalent vertex is the intersection between an arc from one vertex on $T$ to $Q$ with $S$. Each 4-valent vertex is given by a blue arc that goes towards $Q$ and is the intersection of the images under $i$ of two 2-faces of $C$. Suppose the 2-polyhedron $F$ is obtained from $C$ by removing the preimage of the part of $i(C)$ in $S$ under $i$. Then $F\cup_R S$ gives a fake surface with $T$ as its boundary. The neighborhood of a trivalent vertex is the union of an I-bundle of Y and a disk which gives a vertex of a fake surface. Near a 4-valent vertex where two 2-faces of $C$ intersect, we can imagine one 2-face on one side of $S$ and the other 2-face on the other side which gives a vertex of a fake surface.
\end{proof}
\begin{figure}
    \centering
\begin{tikzpicture}
\draw (0,0)--(0,2);
\draw (0,0)--(-1,-1);
\draw (0,0)--(1,-1);
\draw[red] (5.5,0)--(5.5,1);
\draw[red] (5.5,0)--(5,-0.5);
\draw[red] (5.5,0)--(6,-0.5);
\draw (-0.8,0.2)--(-0.2,0.8);
\draw (0.2,1.2)--(1.2,2.2);
\draw[red] (5.1,0.1)--(5.9,0.9);
\draw (0,0)--(5.5,0);
\draw[dashed] (5.5,0)--(6,0);
\draw (0,2)..controls(3,2)..(5.5,1);
\draw (-0.8,0.2)--(5.1,0.1);
\draw (1.2,2.2)..controls(3.7,2.2)..(5.9,0.9);
\draw[dashed] (5.1,0.1)..controls(5.5,0.2)..(6,0);
\draw[dashed] (5.9,0.9)--(6,0);
\draw[dashed,blue] (5.5,1)--(6,0);
\draw[dashed] (5.5,0.5)--(6,0);
\draw[blue] (0,1)--(5.5,0.5);

\draw (4,-0.3) arc(-170:110:2);
\filldraw (5.5,0) circle(1pt);
\filldraw (5.5,0.5) circle(1pt);
\filldraw (0,0) circle(1pt);
\filldraw (6,0) circle(1pt);

\node[right] at (6,0){$i(P)=Q$};
\node at (0,-2){$T$};
\node at (6,-3){$S$};
\node at (5.5,-1){$R$};
\end{tikzpicture}
    \caption{Construction of a fake surface}
    \label{fig:enter-label}
\end{figure}

\section{Classification of Acyclic Cellular Fake surfaces}
\label{sec:classification}

In this section, we discuss the classification of acyclic cellular fake surfaces of complexities 1, 2, 3, and 4, along with complexity 5 in the case of no disks of boundary length 1 and 2, which we henceforth call small disks. Full data for the classification can be found on GitHub at \url{https://github.com/lucasfagan/Fake-Surfaces}. Acyclic cellular fake surfaces of complexity 1 are classified in \cite{ikeda1971acyclic}, and there are two of them: one spine of the three ball, the abalone, and one non-spine.

\subsection{Notation and convention}
\label{subsec:notation-convention}
We present a fake surface $F$ as $(\Gamma_i^j, M)$ where $\Gamma_i^j$ is the adjacency matrix of the $i$th 1-skeleton of complexity $j$, and $M$ is a matrix representing the 2-cell attaching maps. A negative entry denotes going along the edge in the opposite way. In this section, we pad the attaching maps with 0s when they are shorter than the longest disk. 

The last two columns are the answers to whether the disk is embedded and has a trivial $T$-bundle, respectively.  

Our notation of fake surfaces relies on ordering the vertices and edges. We adopt the following conventions.

We present the 1-skeleta, viewed as 4-regular multigraphs, as follows: consider the map
$\mathsf{D}:M_{n}(\mathbb{Z})\to \mathbb{Z}$ that gives a decimal representation of a matrix by simply concatenating the rows from top to bottom into an integer with $n^2$ digits. An example makes this clear: 
\[ \mathsf{D}\left(\begin{bmatrix}1&2\\3&4\end{bmatrix}\right)=1234.
\]
Explicitly, if $A=(a_{i,j}),  \mathsf{D}(A)=\sum_{i, j=1}^n 10^{n(n-i)+n-j} a_{i,j}$ as an integer.

For a given 1-skeleton of complexity $n$, we choose the adjacency matrix $A=(a_{i,j})$ and therefore an ordering of the vertices that maximizes $\mathsf{D}(A)$. Note this also gives a canonical ordering of the 1-skeleta for a given complexity: from largest value of $\mathsf{D}(A)$ to smallest. 

We then label the edges in the order they appear in the upper-right triangle of the chosen adjacency matrix, read left-to-right and top-to-bottom. To be precise, using our ordering of the vertices, let $(i_1,j_1)$ and $(i_2,j_2)$ be edges satisfying $i_1\leq j_1$ and $i_2\leq j_2$. Then $(i_1,j_1)<(i_2,j_2)$ if $i_1<i_2$ or $i_1=i_2$ and $j_1<j_2$. We then label edges from 1 to $2n$ in accordance with this ordering. The order of multiple edges between two vertices can be chosen arbitrarily. 

We provide an example to make this clear. Consider the graph $\Gamma$ given by the adjacency matrix 
\[A=
\begin{bmatrix}
2&1&1\\
1&0&3\\
1&3&0
\end{bmatrix}.
\]
Observe that this ordering of the vertices is the one that maximizes $\mathsf{D}(A)$. Then the self-loop is first edge, the two other edges at that vertex are the next two edges, and the three edges connecting the other two vertices are the fourth, fifth, and sixth edges. Thus we label the edges by any of the 12 valid labelings. One such labeling is shown in Fig.~\ref{fig:example}:
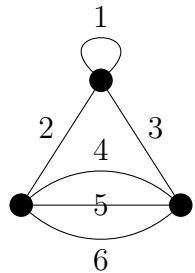
\begin{figure}[h]
\caption{Valid Edge Labeling for $\Gamma$}
\label{fig:example}
\centering
\begin{tikzpicture}[
    node distance=1.5cm, 
    every node/.style={circle, draw, fill=black, inner sep=3pt},
    edge_label/.style={midway, draw=none, inner sep=2pt, fill=none} 
] 
    \node (A) {};
    \node (B) [below right of=A, yshift=-0.6cm] {};
    \node (C) [below left of=A, yshift=-0.6cm] {};

    \draw (A) to node[edge_label, above right] {3} (B);
    \draw (B) to[bend left=40] node[edge_label, below] {6} (C); 
    \draw (B) to node[edge_label] {5} (C); 
    \draw (B) to[bend right=40] node[edge_label, above] {4} (C);
    \draw (A) to node[edge_label, above left] {2} (C); 
    \draw (A) to [out=135, in=45, looseness=10] node[edge_label, above] {1} (A); 
\end{tikzpicture}
\end{figure}

Recall that a fake surface is acyclic if it is homologous to a point, and contractible if it is homotopic to a point.  
Any acyclic cellular fake surface with connected 1-skeleton and $t$ many type 2 singularities has $t+1$ disks and $2t$ edges.

\subsection{Classification strategy}

\subsubsection{Classification of 4-regular multigraphs}
We first enumerate all connected 4-regular multigraphs viewed as the 1-skeleta. This can be easily done through enumerating valid adjacency matrices. We provide some combinatorics of the 1-skeleta for each complexity in Tables~\ref{tab:complexity_table} and \ref{tab:cycle_table}.
\begin{table}[h]
\centering
\caption{One-Skeleta by Complexity and Number of Self-loops}
\label{tab:complexity_table}
\begin{tabular}{c|cccccccc|c} 
\toprule
\textbf{Complexity} & \textbf{0} & \textbf{1} & \textbf{2} & \textbf{3} & \textbf{4} & \textbf{5} & \textbf{6} & \textbf{7} & \textbf{Total} \\
\midrule
1 & 0 & 0 & 1 & & & & & & 1 \\
2 & 1 & 0 & 1 & & & & & & 2 \\
3 & 1 & 1 & 1 & 1 & & & & & 4 \\
4 & 3 & 2 & 3 & 1 & 1 & & & & 10 \\
5 & 6 & 7 & 7 & 5 & 2 & 1 & & & 28 \\
6 & 19 & 21 & 28 & 16 & 10 & 2 & 1 & & 97 \\
7 & 50 & 85 & 98 & 72 & 36 & 14 & 3 & 1 & 359\\
\bottomrule
\end{tabular}
\end{table}
\begin{table}[h]
\centering
\caption{One-skeleta by Length of Shortest Cycle}
\label{tab:cycle_table}
\begin{tabular}{c|ccc|c} 
\toprule
\textbf{Complexity} & \textbf{1} & \textbf{2} & \textbf{3} & \textbf{Total} \\
\midrule
1 & 1 & 0 & 0 & 1 \\
2 & 1 & 1 & 0 & 2 \\
3 & 3 & 1 & 0 & 4 \\
4 & 7 & 3 & 0 & 10 \\
5 & 22 & 5 & 1 & 28 \\
6 & 78 & 18 & 1 & 97 \\
7 & 309 & 48 & 2 & 359\\
\bottomrule
\end{tabular}
\end{table}

\subsubsection{Creating Exhaustive List of Fake Surfaces}
We classify fake surfaces for specific 1-skeleta. Given a 1-skeleton with oriented, labeled edges, we first enumerate all ways to attach oriented disks to the 1-skeleton within the constrains of having a type 2 singularity at each vertex. Then, we enumerate all combinations of disks which may form a fake surface and check whether or not there is a type 2 singularity at each vertex. If this is satisfied, we check that the surface is acyclic by collapsing a maximal tree of the 1-skeleton and checking that the determinant of the boundary map $\mathbb{Z}^{n+1}\to \mathbb{Z}^{2n}$ is $\pm 1$. Then we store these acyclic fake surfaces.

\subsubsection{Removing Redundancy}
The final step is to remove redundancy from our list of fake surfaces. We classify fake surfaces up to PL homeomorphism, which is the same as the equivalence relation defined by sequences of the following combinatorial conditions:
\begin{enumerate}
    \item Relabel the edges of the graph (i.e., apply a graph automorphism)
    \item Change the orientation of an edge (i.e., invert all appearances of that edge)
    \item Change the orientation of a disk (i.e., reverse the order of the edges and invert each edge) 
    \item Cyclically rotate the list of edges of a disk
\end{enumerate}

\begin{remark}
Two cellular fake surfaces are connected by a finite sequence of the above moves if and only if they are PL homeomorphic to each other, i.e., after endowing two surfaces with triangulations, the two simplicial complexes are simplicial isomorphic after subdivision \cite{rourke1982introduction}. A sequence of the moves gives a homeomorphism between two fake surfaces which maps vertices to vertices, edges to edges, and regions to regions. Conversely, given triangulations on two fake surfaces respectively, we can subdivide their 1-skeletons such that they have the same simplicial structures and extend them to regions which are open disks.
\end{remark}

We then output the list of unique surfaces up to these aforementioned rules. 
The code for this classification can be found at \url{https://github.com/lucasfagan/Fake-Surfaces}.

\subsection{Fake Surfaces up to Complexity 5}
\subsubsection{Complexity 1}
There is a unique 1-skeleton for complexity 1, which is given by $\mathbf{\Gamma^1}=
(\Gamma_1^1)=(\begin{bmatrix}
4
\end{bmatrix})$ and shown in Fig.~\ref{fig:c1}.
\begin{figure}[h]
\centering
\begin{tikzpicture}[node distance=2cm, every node/.style={circle, draw, fill=black, inner sep=3pt}] 
  \node (A) {};
  \draw (A) to [out=135, in=225, looseness=15] (A);
  \draw (A) to [out=45, in=-45, looseness=15] (A);
\end{tikzpicture}
\caption{One-Skeleton for Complexity 1}
\label{fig:c1}
\end{figure}
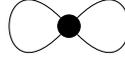

There are two acyclic fake surfaces of complexity 1, first classified in \cite{ikeda1971acyclic}, which we reproduce here in the notation introduced in Section~\ref{subsec:notation-convention}.
\[\left(\Gamma_1^1,\left[\begin{array}{ccccc|cc}
1&2&2&-1&-2&N&N\\
1&0&0&0&0&Y&Y\end{array}\right]\right)\]
\[\left(\Gamma_1^1,\left[\begin{array}{ccccc|cc}
1&2&2&1&-2&N&Y\\
1&0&0&0&0&Y&Y\end{array}\right]\right)\]

The first is a non-spine as the top disk has a non-trivial $T$-bundle, and the second is a spine of the 3-ball called the abalone. 

Note that around a vertex with no self-loops, edge orientations are unnecessary as they are implied by the ordering, but these two surfaces demonstrate that edge orientations for self-loops is critical. Indeed, describing attaching maps with unoriented edges fail to differentiate between these two surfaces.  
\subsubsection{Complexity 2}
For complexity 2, there are two 1-skeleta, ordered according to our convention in the following way and shown in Fig.~\ref{fig:c2}. \[\mathbf{\Gamma^2}=
(\Gamma_1^2,\Gamma_2^2)=\left(\begin{bmatrix}
2&2\\
2&2\\
\end{bmatrix},
\begin{bmatrix}
0&4\\
4&0\\
\end{bmatrix}
\right)
\]
\begin{figure}[h]
\centering
\begin{tikzpicture}[node distance=2cm, every node/.style={circle, draw, fill=black, inner sep=3pt}] 

  \def\hs{4} 

  \begin{scope}[shift={(0,0)}] 
    \node (A) {};
    \node (B) [right of=A] {};

    \draw (A) to[bend left] (B);
    \draw (A) to[bend right] (B);

    \draw (A) to [out=135, in=225, looseness=15] (A);
    \draw (B) to [out=45, in=-45, looseness=15] (B);

    \node[opacity=0] at (0.5,0.8) {}; 
    \node[opacity=0] at (0.5,-0.8) {}; 
  \end{scope}

  \begin{scope}[shift={(\hs,0)}]  
    \node (C) {};
    \node (D) [right of=C] {};

    \draw (C) to[bend left=60] (D);
    \draw (C) to[bend left=20] (D);
    \draw (C) to[bend right=20] (D);
    \draw (C) to[bend right=60] (D);

    \node[opacity=0] at (0.5,0.8) {};
    \node[opacity=0] at (0.5,-0.8) {};
  \end{scope}

\end{tikzpicture}
\caption{One-Skeleta for Complexity 2}
\label{fig:c2}
\end{figure}
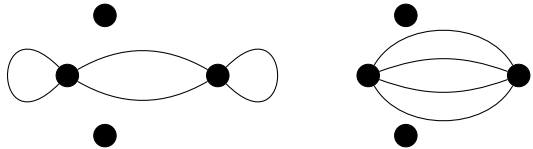

One well-known example of a complexity 2 fake surface is Bing's house, also called the house with two rooms. This surface can be presented as follows:
\[\left(\Gamma_1^2,\left[\begin{array}{cccccccccc|cc}
3&-1&-3&2&-1&-2&-4&2&-3&-4&N&Y\\
4&0&0&0&0&0&0&0&0&0&Y&Y\\
-1&0&0&0&0&0&0&0&0&0&Y&Y\end{array}\right]\right).\]
There is another spine of the 3-ball, called the ``mutant'' of the house with two rooms in \cite{matveev2007algorithmic}, which demonstrates again the need for edge orientations:
\[\left(\Gamma_1^2,\left[\begin{array}{cccccccccc|cc}
3&-1&-3&2&1&-2&4&2&-3&-4&N&Y\\
4&0&0&0&0&0&0&0&0&0&Y&Y\\
-1&0&0&0&0&0&0&0&0&0&Y&Y\end{array}\right]\right).\] 
Observe that a description without edge orientations would see these two surfaces as identical.  

There are 17 complexity-2 acyclic cellular fake surfaces, 15 with the first 1-skeleton and two with the second. The remaining acyclic cellular fake surfaces of complexity 2 can be found in Appendix~\ref{sec:fs-c23}.
\subsubsection{Complexity 3}
For complexity 3, there are four 1-skeleta, presented and ordered according to our convention in the following way and shown in Fig.~\ref{fig:c3}. 
\[
\mathbf{\Gamma^3}=(\Gamma_1^3,\Gamma_2^3,\Gamma_3^3,\Gamma_4^3)=
\left(
\begin{bmatrix}
2&2&0\\
2&0&2\\
0&2&2
\end{bmatrix},
\begin{bmatrix}
2&1&1\\
1&2&1\\
1&1&2
\end{bmatrix},
\begin{bmatrix}
2&1&1\\
1&0&3\\
1&3&0
\end{bmatrix},
\begin{bmatrix}
0&2&2\\
2&0&2\\
2&2&0
\end{bmatrix}
\right)
\]
\begin{figure}[h]
\centering
\begin{tikzpicture}[node distance=1.5cm, every node/.style={circle, draw, fill=black, inner sep=3pt}] 

  \def\hs{3.5} 

  \begin{scope}[shift={(0,0)}] 
    \node (A) {};
    \node (B) [below right of=A, yshift=-0.6cm] {};
    \node (C) [below left of=A, yshift=-0.6cm] {};

    \draw (A) to[bend left=20] (B);
    \draw (A) to[bend right=20] (B);
    \draw (B) to[bend left=20] (C);
    \draw (B) to[bend right=20] (C);
    \draw (A) to [out=135, in=225, looseness=10] (A);
    \draw (C) to [out=135, in=225, looseness=10] (C);
  \end{scope}

  \begin{scope}[shift={(\hs,0)}] 
    \node (A) {};
    \node (B) [below right of=A, yshift=-0.6cm] {};
    \node (C) [below left of=A, yshift=-0.6cm] {};

    \draw (A) to (B);
    \draw (B) to (C);
    \draw (A) to (C);
    \draw (A) to [out=135, in=45, looseness=10] (A);
    \draw (C) to [out=135, in=225, looseness=10] (C);
    \draw (B) to [out=45, in=-45, looseness=10] (B);
  \end{scope}

  \begin{scope}[shift={(2*\hs,0)}] 
    \node (A) {};
    \node (B) [below right of=A, yshift=-0.6cm] {};
    \node (C) [below left of=A, yshift=-0.6cm] {};

    \draw (A) to (B);
    \draw (B) to[bend left=20] (C);
    \draw (B) to (C);
    \draw (B) to[bend right=20] (C);
    \draw (A) to (C);
    \draw (A) to [out=135, in=45, looseness=10] (A); 
  \end{scope}

  \begin{scope}[shift={(3*\hs,0)}] 
    \node (A) {};
    \node (B) [below right of=A, yshift=-0.6cm] {};
    \node (C) [below left of=A, yshift=-0.6cm] {};

    \draw (A) to[bend left=20] (B);
    \draw (A) to[bend right=20] (B);
    \draw (B) to[bend left=20] (C);
    \draw (B) to[bend right=20] (C);
    \draw (A) to[bend left=20] (C);
    \draw (A) to[bend right=20] (C);
  \end{scope}

\end{tikzpicture}
\caption{One-Skeleta for Complexity 3}
\label{fig:c3}
\end{figure}
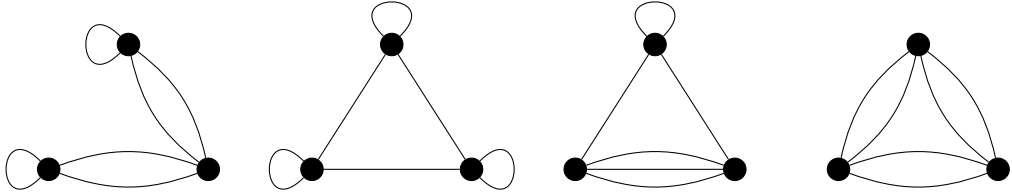

There are 238 acyclic cellular fake surfaces of complexity 3, with 14, 65, 97 and 62 coming from the first, second, third, and fourth one-skeleton, respectively. All are listed at \url{https://github.com/lucasfagan/Fake-Surfaces}.

\subsubsection{Complexity 4}
For complexity 4, there are 10 one-skeleta, presented and ordered according to our convention in the following way and shown in Fig.~\ref{fig:c4}.
\begin{align*}
\mathbf{\Gamma^4}=
& \left(
\begin{bmatrix}
2 & 2 & 0 & 0 \\
2 & 0 & 2 & 0 \\
0 & 2 & 0 & 2 \\
0 & 0 & 2 & 2
\end{bmatrix}, 
\begin{bmatrix}
2 & 2 & 0 & 0 \\
2 & 0 & 1 & 1 \\
0 & 1 & 2 & 1 \\
0 & 1 & 1 & 2
\end{bmatrix},
\begin{bmatrix}
2 & 2 & 0 & 0 \\
2 & 0 & 1 & 1 \\
0 & 1 & 0 & 3 \\
0 & 1 & 3 & 0
\end{bmatrix},
\begin{bmatrix}
2 & 1 & 1 & 0 \\
1 & 2 & 0 & 1 \\
1 & 0 & 2 & 1 \\
0 & 1 & 1 & 2
\end{bmatrix},
\begin{bmatrix}
2 & 1 & 1 & 0 \\
1 & 2 & 0 & 1 \\
1 & 0 & 0 & 3 \\
0 & 1 & 3 & 0
\end{bmatrix},
\right. \\
& \left.
\begin{bmatrix}
2 & 1 & 1 & 0 \\
1 & 0 & 2 & 1 \\
1 & 2 & 0 & 1 \\
0 & 1 & 1 & 2
\end{bmatrix},
\begin{bmatrix}
2 & 1 & 1 & 0 \\
1 & 0 & 1 & 2 \\
1 & 1 & 0 & 2 \\
0 & 2 & 2 & 0
\end{bmatrix},
\begin{bmatrix}
0 & 3 & 1 & 0 \\
3 & 0 & 0 & 1 \\
1 & 0 & 0 & 3 \\
0 & 1 & 3 & 0
\end{bmatrix},
\begin{bmatrix}
0 & 2 & 2 & 0 \\
2 & 0 & 0 & 2 \\
2 & 0 & 0 & 2 \\
0 & 2 & 2 & 0 
\end{bmatrix},
\begin{bmatrix}
0 & 2 & 1 & 1 \\
2 & 0 & 1 & 1 \\
1 & 1 & 0 & 2 \\
1 & 1 & 2 & 0
\end{bmatrix}
\right)
\end{align*}
\begin{figure}[h]
\centering
\begin{tikzpicture}[node distance=1.5cm, every node/.style={circle, draw, fill=black, inner sep=3pt}] 

  \def\hs{3} 
  \def\vs{3} 

  \begin{scope}[shift={(0,0)}] 
    \node (A) {};
    \node (B) [right of=A] {};
    \node (C) [below of=A] {};
    \node (D) [right of=C] {};
    
    \draw (A) to [out=135, in=-135, looseness=10] (A);
    \draw (A) to[bend left=20] (B);
    \draw (A) to[bend right=20] (B);
    \draw (B) to[bend left=20] (D);
    \draw (B) to[bend right=20] (D);
    \draw (D) to[bend left=20] (C);
    \draw (D) to[bend right=20] (C);
    \draw (C) to [out=135, in=-135, looseness=10] (C);
  \end{scope} 

  \begin{scope}[shift={(\hs,0)}] 
    \node (A) {};
    \node (B) [right of=A] {};
    \node (C) [below of=A] {};
    \node (D) [right of=C] {};
    
    \draw (A) to [out=135, in=-135, looseness=10] (A);
    \draw (A) to[bend left=20] (B);
    \draw (A) to[bend right=20] (B);
    \draw (B) to (D);
    \draw (B) to (C);
    \draw (D) to (C);
    \draw (D) to [out=45, in=-45, looseness=10] (D);
    \draw (C) to [out=135, in=-135, looseness=10] (C);
  \end{scope} 

  \begin{scope}[shift={(2*\hs,0)}] 
    \node (A) {};
    \node (B) [right of=A] {};
    \node (C) [below of=A] {};
    \node (D) [right of=C] {};
    
    \draw (A) to [out=135, in=-135, looseness=10] (A);
    \draw (A) to[bend left=20] (B);
    \draw (A) to[bend right=20] (B);
    \draw (B) to (D);
    \draw (B) to (C);
    \draw (D) to[bend left=20] (C);
    \draw (D) to[bend right=20] (C);
    \draw (D) to (C);
  \end{scope} 

  \begin{scope}[shift={(3*\hs,0)}] 
    \node (A) {};
    \node (B) [right of=A] {};
    \node (C) [below of=A] {};
    \node (D) [right of=C] {};
    
    \draw (A) -- (B);
    \draw (A) -- (C);
    \draw (B) -- (D);
    \draw (C) -- (D);
    \draw (A) to [out=135, in=-135, looseness=10] (A);
    \draw (B) to [out=45, in=-45, looseness=10] (B);
    \draw (C) to [out=135, in=-135, looseness=10] (C);
    \draw (D) to [out=45, in=-45, looseness=10] (D);    
  \end{scope} 

  \begin{scope}[shift={(4*\hs,0)}] 
    \node (A) {};
    \node (B) [right of=A] {};
    \node (C) [below of=A] {};
    \node (D) [right of=C] {};
    
    \draw (A) to [out=135, in=45, looseness=10] (A);
    \draw (B) to [out=45, in=135, looseness=10] (B);
    \draw (A) -- (B);
    \draw (A) -- (C);
    \draw (B) -- (D);
    \draw (C) -- (D);
    \draw (C) to [bend left=20] (D);
    \draw (C) to [bend right=20] (D);
  \end{scope} 

  \begin{scope}[shift={(0,-\vs)}] 
    \node (A) {};
    \node (B) [right of=A] {};
    \node (C) [below of=A] {};
    \node (D) [right of=C] {};

    \draw (A) to [out=135, in=-135, looseness=10] (A);
    \draw (A) -- (B);
    \draw (A) -- (C);
    \draw (B) to[bend left=20] (C);
    \draw (B) to[bend right=20] (C);
    \draw (B) -- (D);
    \draw (C) -- (D);
    \draw (D) to [out=45, in=-45, looseness=10] (D);
  \end{scope} 

  \begin{scope}[shift={(\hs,-\vs)}] 
    \node (A) {};
    \node (B) [right of=A] {};
    \node (C) [below of=A] {};
    \node (D) [right of=C] {};

    \draw (A) to [out=135, in=-135, looseness=10] (A);
    \draw (A) -- (B);
    \draw (A) -- (C);
    \draw (B) to (C);
    \draw (B) to[bend left=20] (D);
    \draw (B) to[bend right=20] (D);
    \draw (C) to[bend left=20] (D);
    \draw (C) to[bend right=20] (D);
  \end{scope} 

  \begin{scope}[shift={(2*\hs,-\vs)}] 
    \node (A) {};
    \node (B) [right of=A] {};
    \node (C) [below of=A] {};
    \node (D) [right of=C] {};

    \draw (A) to (B);
    \draw (A) to[bend left=20] (B);
    \draw (A) to[bend right=20] (B);
    \draw (A) -- (C);
    \draw (B) -- (D);
    \draw (C) -- (D);
    \draw (C) to[bend left=20] (D);
    \draw (C) to[bend right=20] (D);
  \end{scope} 

  \begin{scope}[shift={(3*\hs,-\vs)}] 
    \node (A) {};
    \node (B) [right of=A] {};
    \node (C) [below of=A] {};
    \node (D) [right of=C] {};

    \draw (A) to[bend left=20] (B);
    \draw (A) to[bend right=20] (B);
    \draw (A) to[bend left=20] (C);
    \draw (A) to[bend right=20] (C);
    \draw (B) to[bend left=20] (D);
    \draw (B) to[bend right=20] (D);
    \draw (C) to[bend left=20] (D);
    \draw (C) to[bend right=20] (D);
  \end{scope} 

  \begin{scope}[shift={(4*\hs,-\vs)}] 
    \node (A) {};
    \node (B) [right of=A] {};
    \node (C) [below of=A] {};
    \node (D) [right of=C] {};
    
    \draw (A) -- (B);
    \draw (A) -- (C);
    \draw (A) to[bend left=20] (D);
    \draw (A) to[bend right=20] (D);
    \draw (B) to[bend left=20] (C);
    \draw (B) to[bend right=20] (C);
    \draw (B) -- (D);
    \draw (C) -- (D);
  \end{scope} 

\end{tikzpicture}
\caption{One-Skeleta for Complexity 4}
\label{fig:c4}
\end{figure}
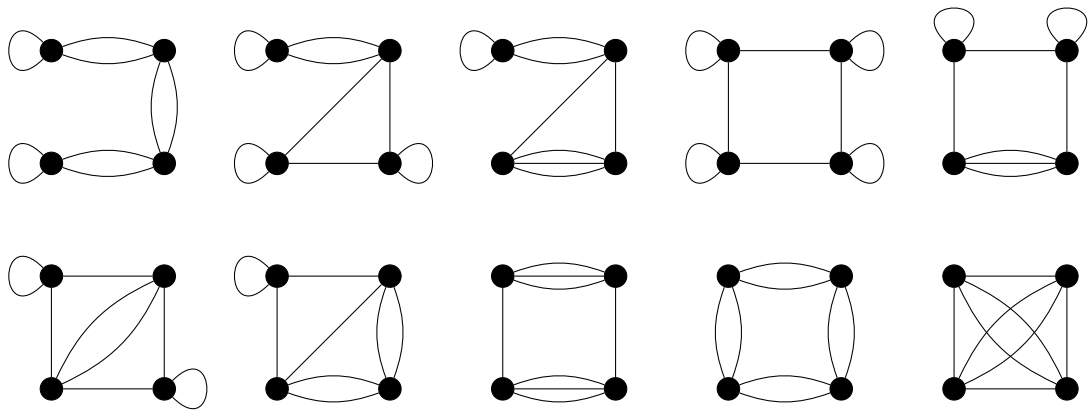
There are 4618 acyclic cellular fake surfaces of complexity 4. The 1-skeleton with the largest number of surfaces is $\Gamma^4_2$ with 1171. The one with the fewest is $\Gamma^4_9$ with 35. All of these are listed at \url{https://github.com/lucasfagan/Fake-Surfaces}.
\subsubsection{Complexity 5}
For complexity 5, there are 28 one-skeleta. We do not list the adjacency matrices, but the list can be found at \url{https://github.com/lucasfagan/Fake-Surfaces}. They are shown in order in Fig.~\ref{fig:c5real}. 

\begin{figure}[h]
\centering
\begin{tikzpicture}[every node/.style={circle, draw, fill=black, inner sep=1.75pt}]

  \def\hs{1.6} 
  \def\vs{1.8} 

  \pgfmathsetmacro\angle{360/5} 
  \pgfmathsetmacro\radius{0.5} 

  \newcommand{\drawpentagon}[2]{ 
    \begin{scope}[shift={(#1,#2)}] 
      \node (A) at (90:\radius) {};
      \node (B) at (90-\angle:\radius) {};
      \node (C) at (90-2*\angle:\radius) {};
      \node (D) at (90-3*\angle:\radius) {};
      \node (E) at (90-4*\angle:\radius) {};

\ifnum\row=3
\ifnum\col=1
\draw (A) to[out=135, in=-135, looseness=10] (A);
\draw (A) to[bend left=20] (B);
\draw (A) to[bend right=20] (B);
\draw (B) to[bend left=20] (C);
\draw (B) to[bend right=20] (C);
\draw (C) to[bend left=20] (D);
\draw (C) to[bend right=20] (D);
\draw (D) to[bend left=20] (E);
\draw (D) to[bend right=20] (E);
\draw (E) to[out=135, in=-135, looseness=10] (E);
\fi
\ifnum\col=2
\draw (A) to[out=135, in=-135, looseness=10] (A);
\draw (A) to[bend left=20] (B);
\draw (A) to[bend right=20] (B);
\draw (B) to[bend left=20] (C);
\draw (B) to[bend right=20] (C);
\draw (C) -- (D);
\draw (C) -- (E);
\draw (D) to[out=135, in=-135, looseness=10] (D);
\draw (D) -- (E);
\draw (E) to[out=135, in=-135, looseness=10] (E);
\fi
\ifnum\col=3
\draw (A) to[out=135, in=-135, looseness=10] (A);
\draw (A) to[bend left=20] (B);
\draw (A) to[bend right=20] (B);
\draw (B) to[bend left=20] (C);
\draw (B) to[bend right=20] (C);
\draw (C) -- (D);
\draw (C) -- (E);
\draw (D) to[bend left=20] (E);
\draw (D) to[bend right=20] (E);
\draw (D) -- (E);
\fi
\ifnum\col=4
\draw (A) to[out=135, in=-135, looseness=10] (A);
\draw (A) to[bend left=20] (B);
\draw (A) to[bend right=20] (B);
\draw (B) -- (C);
\draw (B) -- (D);
\draw (C) to[out=45, in=-45, looseness=10] (C);
\draw (C) -- (D);
\draw (D) to[bend left=20] (E);
\draw (D) to[bend right=20] (E);
\draw (E) to[out=135, in=-135, looseness=10] (E);
\fi
\ifnum\col=5
\draw (A) to[out=135, in=-135, looseness=10] (A);
\draw (A) to[bend left=20] (B);
\draw (A) to[bend right=20] (B);
\draw (B) -- (E);
\draw (B) -- (C);
\draw (C) to[out=45, in=-45, looseness=10] (C);
\draw (C) -- (D);
\draw (D) to[out=135, in=-135, looseness=10] (D);
\draw (D) -- (E);
\draw (E) to[out=135, in=-135, looseness=10] (E);
\fi
\ifnum\col=6
\draw (A) to[out=135, in=-135, looseness=10] (A);
\draw (A) to[bend left=20] (B);
\draw (A) to[bend right=20] (B);
\draw (B) -- (C);
\draw (B) -- (E);
\draw (C) to[out=45, in=-45, looseness=10] (C);
\draw (C) -- (D);
\draw (D) to[bend left=20] (E);
\draw (D) to[bend right=20] (E);
\draw (D) -- (E);
\fi
\ifnum\col=7
\draw (A) to[out=135, in=-135, looseness=10] (A);
\draw (A) to[bend left=20] (B);
\draw (A) to[bend right=20] (B);
\draw (B) -- (C);
\draw (B) -- (D);
\draw (C) to[bend left=20] (D);
\draw (C) to[bend right=20] (D);
\draw (C) -- (E);
\draw (D) -- (E);
\draw (E) to[out=135, in=-135, looseness=10] (E);
\fi
\ifnum\col=8
\draw (A) to[out=135, in=-135, looseness=10] (A);
\draw (A) to[bend left=20] (B);
\draw (A) to[bend right=20] (B);
\draw (B) -- (C);
\draw (B) -- (D);
\draw (C) -- (D);
\draw (C) to[bend left=20] (E);
\draw (C) to[bend right=20] (E);
\draw (D) to[bend left=20] (E);
\draw (D) to[bend right=20] (E);
\fi
\ifnum\col=9
\draw (A) to[out=135, in=-135, looseness=10] (A);
\draw (A) -- (B);
\draw (A) -- (C);
\draw (B) to[out=45, in=-45, looseness=10] (B);
\draw (B) -- (C);
\draw (C) -- (D);
\draw (C) -- (E);
\draw (D) to[out=135, in=-135, looseness=10] (D);
\draw (D) -- (E);
\draw (E) to[out=135, in=-135, looseness=10] (E);
\fi
\ifnum\col=10
\draw (A) to[out=135, in=-135, looseness=10] (A);
\draw (A) -- (B);
\draw (A) -- (C);
\draw (B) to[out=45, in=-45, looseness=10] (B);
\draw (B) -- (C);
\draw (C) -- (D);
\draw (C) -- (E);
\draw (D) to[bend left=20] (E);
\draw (D) to[bend right=20] (E);
\draw (D) -- (E);
\fi
\fi
\ifnum\row=2
\ifnum\col=1
\draw (A) to[out=135, in=-135, looseness=10] (A);
\draw (A) -- (B);
\draw (A) -- (C);
\draw (B) to[out=45, in=-45, looseness=10] (B);
\draw (B) -- (D);
\draw (C) to[out=45, in=-45, looseness=10] (C);
\draw (C) -- (E);
\draw (D) to[out=135, in=-135, looseness=10] (D);
\draw (D) -- (E);
\draw (E) to[out=135, in=-135, looseness=10] (E);
\fi
\ifnum\col=2
\draw (A) to[out=135, in=-135, looseness=10] (A);
\draw (A) -- (B);
\draw (A) -- (C);
\draw (B) to[out=45, in=135, looseness=10] (B);
\draw (B) -- (D);
\draw (C) to[out=45, in=-45, looseness=10] (C);
\draw (C) -- (E);
\draw (D) to[bend left=20] (E);
\draw (D) to[bend right=20] (E);
\draw (D) -- (E);
\fi
\ifnum\col=3
\draw (A) to[out=135, in=-135, looseness=10] (A);
\draw (A) -- (B);
\draw (A) -- (C);
\draw (B) to[out=45, in=-45, looseness=10] (B);
\draw (B) -- (D);
\draw (C) to[bend left=20] (D);
\draw (C) to[bend right=20] (D);
\draw (C) -- (E);
\draw (D) -- (E);
\draw (E) to[out=135, in=-135, looseness=10] (E);
\fi
\ifnum\col=4
\draw (A) to[out=135, in=-135, looseness=10] (A);
\draw (A) -- (B);
\draw (A) -- (C);
\draw (B) to[out=45, in=-45, looseness=10] (B);
\draw (B) -- (D);
\draw (C) -- (D);
\draw (C) to[bend left=20] (E);
\draw (C) to[bend right=20] (E);
\draw (D) to[bend left=20] (E);
\draw (D) to[bend right=20] (E);
\fi
\ifnum\col=5
\draw (A) to[out=135, in=-135, looseness=10] (A);
\draw (A) -- (B);
\draw (A) -- (C);
\draw (B) to[bend left=20] (C);
\draw (B) to[bend right=20] (C);
\draw (B) -- (D);
\draw (C) -- (E);
\draw (D) to[bend left=20] (E);
\draw (D) to[bend right=20] (E);
\draw (D) -- (E);
\fi
\ifnum\col=6
\draw (A) to[out=135, in=-135, looseness=10] (A);
\draw (A) -- (B);
\draw (A) -- (C);
\draw (B) -- (C);
\draw (B) to[bend left=20] (D);
\draw (B) to[bend right=20] (D);
\draw (C) -- (D);
\draw (C) -- (E);
\draw (D) -- (E);
\draw (E) to[out=135, in=-135, looseness=10] (E);
\fi
\ifnum\col=7
\draw (A) to[out=135, in=-135, looseness=10] (A);
\draw (A) -- (B);
\draw (A) -- (C);
\draw (B) -- (C);
\draw (B) to[bend left=20] (D);
\draw (B) to[bend right=20] (D);
\draw (C) to[bend left=20] (E);
\draw (C) to[bend right=20] (E);
\draw (D) to[bend left=20] (E);
\draw (D) to[bend right=20] (E);
\fi
\ifnum\col=8
\draw (A) to[out=135, in=-135, looseness=10] (A);
\draw (A) -- (B);
\draw (A) -- (C);
\draw (B) -- (C);
\draw (B) -- (D);
\draw (B) -- (E);
\draw (C) -- (D);
\draw (C) -- (E);
\draw (D) to[out=135, in=-135, looseness=10] (D);
\draw (E) to[out=135, in=-135, looseness=10] (E);
\fi
\ifnum\col=9
\draw (A) to[out=135, in=-135, looseness=10] (A);
\draw (A) -- (B);
\draw (A) -- (C);
\draw (B) -- (C);
\draw (B) -- (D);
\draw (B) -- (E);
\draw (C) -- (D);
\draw (C) -- (E);
\draw (D) to[bend left=20] (E);
\draw (D) to[bend right=20] (E);
\fi
\ifnum\col=10
\draw (A) to[out=135, in=-135, looseness=10] (A);
\draw (A) -- (B);
\draw (A) -- (C);
\draw (B) to[bend left=20] (D);
\draw (B) to[bend right=20] (D);
\draw (B) -- (D);
\draw (C) to[bend left=20] (E);
\draw (C) to[bend right=20] (E);
\draw (C) -- (E);
\draw (D) -- (E);
\fi
\fi
\ifnum\row=1
\ifnum\col=1
\draw (A) to[out=135, in=-135, looseness=10] (A);
\draw (A) -- (B);
\draw (A) -- (C);
\draw (B) to[bend left=20] (D);
\draw (B) to[bend right=20] (D);
\draw (B) -- (E);
\draw (C) to[bend left=20] (D);
\draw (C) to[bend right=20] (D);
\draw (C) -- (E);
\draw (E) to[out=135, in=-135, looseness=10] (E);
\fi
\ifnum\col=2
\draw (A) to[out=135, in=-135, looseness=10] (A);
\draw (A) -- (B);
\draw (A) -- (C);
\draw (B) to[bend left=20] (D);
\draw (B) to[bend right=20] (D);
\draw (B) -- (E);
\draw (C) -- (D);
\draw (C) to[bend left=20] (E);
\draw (C) to[bend right=20] (E);
\draw (D) -- (E);
\fi
\ifnum\col=3
\draw (A) to[bend left=20] (B);
\draw (A) to[bend right=20] (B);
\draw (A) -- (B);
\draw (A) -- (C);
\draw (B) -- (C);
\draw (C) -- (D);
\draw (C) -- (E);
\draw (D) to[bend left=20] (E);
\draw (D) to[bend right=20] (E);
\draw (D) -- (E);
\fi
\ifnum\col=4
\draw (A) to[bend left=20] (B);
\draw (A) to[bend right=20] (B);
\draw (A) -- (B);
\draw (A) -- (C);
\draw (B) -- (D);
\draw (C) -- (D);
\draw (C) to[bend left=20] (E);
\draw (C) to[bend right=20] (E);
\draw (D) to[bend left=20] (E);
\draw (D) to[bend right=20] (E);
\fi
\ifnum\col=5
\draw (A) to[bend left=20] (B);
\draw (A) to[bend right=20] (B);
\draw (A) to[bend left=20] (C);
\draw (A) to[bend right=20] (C);
\draw (B) to[bend left=20] (D);
\draw (B) to[bend right=20] (D);
\draw (C) to[bend left=20] (E);
\draw (C) to[bend right=20] (E);
\draw (D) to[bend left=20] (E);
\draw (D) to[bend right=20] (E);
\fi
\ifnum\col=6
\draw (A) to[bend left=20] (B);
\draw (A) to[bend right=20] (B);
\draw (A) to[bend left=20] (C);
\draw (A) to[bend right=20] (C);
\draw (B) -- (D);
\draw (B) -- (E);
\draw (C) -- (D);
\draw (C) -- (E);
\draw (D) to[bend left=20] (E);
\draw (D) to[bend right=20] (E);
\fi
\ifnum\col=7
\draw (A) to[bend left=20] (B);
\draw (A) to[bend right=20] (B);
\draw (A) -- (C);
\draw (A) -- (D);
\draw (B) -- (C);
\draw (B) -- (E);
\draw (C) -- (D);
\draw (C) -- (E);
\draw (D) to[bend left=20] (E);
\draw (D) to[bend right=20] (E);
\fi
\ifnum\col=8
\draw (A) -- (B);
\draw (A) -- (C);
\draw (A) -- (D);
\draw (A) -- (E);
\draw (B) -- (C);
\draw (B) -- (D);
\draw (B) -- (E);
\draw (C) -- (D);
\draw (C) -- (E);
\draw (D) -- (E);
\fi
\fi
    \end{scope}
  }

  \foreach \row in {2,...,3} { 
    \foreach \col in {1,...,10} {
      \drawpentagon{\col*\hs}{\row*\vs}
    }
  }
\foreach \row in {1,...,1} {
  \foreach \col in {1,...,8} {
      \drawpentagon{\col*\hs}{\row*\vs}
      }
  }

\end{tikzpicture}
\caption{One-Skeleta for Complexity 5}
\label{fig:c5real}
\end{figure}

Important to note is that complexity 5 is the first in which there exists a 1-skeleton that is a simple graph, i.e., does not have a double edge or self loop: $K_5$. 

Due to computational limitations, we chose to classify only those surfaces where there are no small disks. Disks of boundary length 1 and 2 are necessarily embedded in complexity greater than 1, so this still allowed us to check that all complexity 5 surfaces have embedded disks. All acyclic cellular fake surfaces of complexity 5 without small disks are listed at \url{https://github.com/lucasfagan/Fake-Surfaces}. 

\section{Contractibility and Embedded Disk Conjectures}
\label{sec:conjs}

\subsection{Contractability conjecture}

Ikeda proved that any acyclic cellular fake surface spine of complexity less than 3 is contractible.  In Conjecture 2 on Page 4 of \cite{robertson1973fake}, it is shown that any acyclic cellular fake surface of complexity less than 4 is contractible, and it is conjectured that the same holds for complexity 4.  In this section, we prove this conjecture.  This is the best possible general result as the dodecahedral Poincar\'e homology sphere has a spine of complexity 5 that is not contractible.

\begin{restated-cor}
    Acyclic cellular fake surfaces of complexity 1, 2, 3 and 4 are all contractible.
\end{restated-cor}

The proof is by an explicit computer check using our classification.  The computer code to check that an acyclic surface is contractible can be found in Appendix~\ref{sec:code} and at \url{https://github.com/lucasfagan/Fake-Surfaces}.  For complexity 1 to 4, we verified that the fundamental groups are all trivial.

\subsection{Embedded disk conjecture}

Embedded disks are useful for the proof of the Zeeman conjecture.  For complexity 1 contractible fake surfaces, the Zeeman conjecture is proved using the embedded disks \cite{ikeda1971acyclic}.  In \cite{fagan-qiu-wang-stableac}, we extend the proof of Zeeman conjecture to complexity up to 5 for contractible fake surfaces.  Our proof relies on the following:

\begin{restated-thm}
    All contractible fake surfaces up to complexity 5 have embedded disks.
\end{restated-thm}

The statement for complexity up to 4 is verified explicitly using our classification. We do not have a complete list of all complexity 5 acyclic cellular fake surfaces as in complexity up to 4.
But we have such a list for all acyclic cellular fake surfaces without small disks.
All those fake surfaces have embedded disks as small disks are always embedded.

We conjecture that all contractible fake surfaces have embedded disks.  The embedded disks can be thought as the weak parts of the resulting presentations of the trivial group.

\section*{Acknowledgments}
Z.W. is partially supported by NSF grant CCF 2006463 and ARO MURI contract W911NF-20-1-0082. Use was made of computational facilities purchased with funds from the National Science Foundation (CNS-1725797, OAC-1925717) and administered by the Center for Scientific Computing (CSC). The CSC is supported by the California NanoSystems Institute and the Materials Research Science and Engineering Center (MRSEC; NSF DMR 2308708) at UC Santa Barbara.

The authors report there are no competing interests to declare. 
\bibliographystyle{abbrv}
\bibliography{references}

\appendix
\section{Code to Check Contractibility of Acyclic Fake Surface}
\label{sec:code}
In order to check if an acyclic fake surface is contractible, it suffices to check that it is simply connected by the Whitehead theorem and the Hurewicz theorem. We provide a Sage code snippet that takes as input a fake surface and a choice of maximal tree, and outputs if the fundamental group is trivial. Of course, deciding if a group presentation is the trivial group is undecideable, but this only leaves open the possibility of false negatives. Sage is able to detect the triviality of the fundamental group of every fake surface through complexity 5, with the lone exception being the known counterexample of the complexity 5 spine of the Poincar\'e homology sphere. 

The code can also be found at \url{https://github.com/lucasfagan/Fake-Surfaces}.

\begin{lstlisting}[language=Python, style=mystyle, breaklines=true]
# sample data from complexity 2 to demonstrate formatting
acyclic_surface = [[4,2,-1,-1,-2,3,2,1,-3],[2,-3],[4]]
maximal_tree = [2]

def is_contractible(acyclic_surface, maximal_tree):
    # remove the elements in the maximal tree
    collapsed_surface = [[edge for edge in disk if (edge not in maximal_tree and -1*edge not in maximal_tree)] for disk in acyclic_surface]
    # for conveinence, reduce the remaining edges to be labeled with 1 through n+1
    for removed_edge in sorted(maximal_tree, reverse=True):
        collapsed_surface = [[edge-1 if edge>removed_edge else edge for edge in disk] for disk in collapsed_surface]
        collapsed_surface = [[edge+1 if edge<-1*removed_edge else edge for edge in disk] for disk in collapsed_surface] 

    # create a free group with one generator for each remaining edge
    F = FreeGroup(len(maximal_tree)+2,'a')

    group_relations = []
    
    # convert the disks to relations 
    for disk in collapsed_surface:
        relation = F.one()  # Initialize with identity
        for edge in disk:
            if edge < 0:
                relation *= (F.gen(-edge-1))**-1  # Inverse of generator
            else:
                relation *= F.gen(edge-1)  # Multiply with generator

        # Append the relation
        group_relations.append(relation)

    # Create quotient group
    G = F.quotient(group_relations) 
    # Simplify the group
    G = G.simplified()

    # Check if group is trivial
    if G.order()!=1:
        return False
    return True

print(is_contractible(acyclic_surface, maximal_tree))
\end{lstlisting}

Saved as a Sage file, we can simply run this with the command
\begin{verbatim}
    sage filename.sage
\end{verbatim}
\section{Fake Surfaces of Complexity 2}
All acyclic cellular fake surfaces of complexity 2 are listed below. The rest of the data can be found at \url{https://github.com/lucasfagan/Fake-Surfaces}.
\label{sec:fs-c23}
\begin{multicols}{3}
{\fontsize{8pt}{12pt}\selectfont 
\[\left(\Gamma_1^2,\left[\begin{array}{c|cccc|cccc|cc}
42\text{-}1\text{-}1\text{-}2431\text{-}3&NN\\
2\text{-}3&YN\\
4&YN\end{array}\right]\right)\]
\[\left(\Gamma_1^2,\left[\begin{array}{c|cccc|cccc|cc}
43\text{-}1\text{-}1\text{-}2\text{-}431\text{-}2&NY\\
2\text{-}3&YN\\
4&YN\end{array}\right]\right)\]
\[\left(\Gamma_1^2,\left[\begin{array}{c|cccc|ccccc|cc}
43\text{-}2\text{-}42\text{-}1\text{-}231\text{-}3&NY\\
4&YY\\
\text{-}1&YY\end{array}\right]\right)\]
\[\left(\Gamma_1^2,\left[\begin{array}{c|cccc|ccccc|cc}
431\text{-}23\text{-}1\text{-}242\text{-}3&NN\\
4&YY\\
\text{-}1&YN\end{array}\right]\right)\]
\[\left(\Gamma_1^2,\left[\begin{array}{c|cccc|ccccc|cc}
42\text{-}1\text{-}3\text{-}43\text{-}23\text{-}1\text{-}2&NY\\
4&YY\\
\text{-}1&YN\end{array}\right]\right)\]
\[\left(\Gamma_1^2,\left[\begin{array}{c|cccc|ccccc|cc}
3\text{-}1\text{-}32\text{-}1\text{-}2\text{-}42\text{-}3\text{-}4&NY\\
4&YY\\
\text{-}1&YY\end{array}\right]\right)\]
\[\left(\Gamma_1^2,\left[\begin{array}{c|ccccc|cccc|cc}
42\text{-}1\text{-}231\text{-}3\text{-}42\text{-}3&NY\\
4&YN\\
\text{-}1&YY\end{array}\right]\right)\]
\[\left(\Gamma_1^2,\left[\begin{array}{c|ccccc|cccc|cc}
3\text{-}23\text{-}1\text{-}3\text{-}42\text{-}1\text{-}2\text{-}4&NN\\
4&YN\\
\text{-}1&YY\end{array}\right]\right)\]
\[\left(\Gamma_1^2,\left[\begin{array}{c|c|cccc|cccc|cc}
42\text{-}3431\text{-}32\text{-}1\text{-}2&NN\\
4&YY\\
\text{-}1&YY\end{array}\right]\right)\]
\[\left(\Gamma_1^2,\left[\begin{array}{c|cccccc|cc}
442\text{-}32\text{-}1\text{-}2&NN\\
43\text{-}1\text{-}3&NN\\
\text{-}1&YY\end{array}\right]\right)\]
\[\left(\Gamma_1^2,\left[\begin{array}{c|cccccc|cc}
3\text{-}1\text{-}3\text{-}42\text{-}1\text{-}2&NY\\
442\text{-}3&NY\\
\text{-}1&YY\end{array}\right]\right)\]
\[\left(\Gamma_1^2,\left[\begin{array}{c|cccccc|cc}
43\text{-}1\text{-}1\text{-}32\text{-}3&NY\\
2\text{-}1\text{-}2\text{-}4&NN\\
4&YY\end{array}\right]\right)\]
\[\left(\Gamma_1^2,\left[\begin{array}{c|cccccc|cc}
43\text{-}1\text{-}2\text{-}42\text{-}3&NY\\
2\text{-}1\text{-}1\text{-}3&NN\\
4&YY\end{array}\right]\right)\]
\[\left(\Gamma_1^2,\left[\begin{array}{c|cccc|ccc|cc}
21\text{-}3\text{-}42\text{-}1\text{-}1\text{-}3&NN\\
42\text{-}3&NY\\
4&YN\end{array}\right]\right)\]
\[\left(\Gamma_1^2,\left[\begin{array}{c|cccc|ccc|cc}
443\text{-}1\text{-}2\text{-}42\text{-}3&NY\\
2\text{-}1\text{-}3&NY\\
\text{-}1&YN\end{array}\right]\right)\]

\[\left(\Gamma_2^2,\left[\begin{array}{cccccccc|cc}
4\text{-}31\text{-}42\text{-}41\text{-}3&NN\\
1\text{-}2&YN\\
2\text{-}3&YN\end{array}\right]\right)\]
\[\left(\Gamma_2^2,\left[\begin{array}{cccccccc|cc}
4\text{-}12\text{-}43\text{-}41\text{-}2&NY\\
2\text{-}3&YN\\
1\text{-}3&YN\end{array}\right]\right)\]

}
\end{multicols}
\end{document}